\documentclass[11pt,reqno]{amsart}

\usepackage[utf8]{inputenc} 



\usepackage[margin=1in]{geometry} 


\usepackage{graphicx} 
\usepackage{float} 

\usepackage[parfill]{parskip} 


\usepackage{booktabs} 
\usepackage{array} 
\usepackage{paralist} 
\usepackage{verbatim} 
\usepackage{subfig} 
\usepackage{mathrsfs}
\usepackage{amssymb}
\usepackage{amsthm}
\usepackage{amsmath,amsfonts,amssymb,esint,hyperref}
\usepackage[noabbrev, capitalize]{cleveref}
\usepackage{graphics,color}
\usepackage{enumerate, enumitem}
\usepackage{mathtools,centernot}
\usepackage{cases}
\usepackage{amsrefs}
\usepackage{bbm}
\usepackage{xfrac}
\usepackage{xcolor}
\usepackage{autonum}



\pagestyle{plain} 

\bibliographystyle{alphabetic}


\usepackage{bookmark}

\newtheorem{theorem}{Theorem}[section]
\newtheorem{lemma}[theorem]{Lemma}

\newtheorem{corollary}[theorem]{Corollary}
\newtheorem{definition}[theorem]{Definition}
\newtheorem{proposition}[theorem]{Proposition}
\newtheorem{remark}[theorem]{Remark}

\numberwithin{equation}{section}

\newcommand{\T}{\ensuremath{\mathbb{T}}}
\newcommand*{\R}{\ensuremath{\mathbb{R}}}

\newcommand*{\N}{\ensuremath{\mathbb{N}}}

\newcommand{\eps}{\varepsilon}

\newcommand{\quotes}[1]{``#1''}

\renewcommand{\MR}[1]{} 

\usepackage{color, graphicx}
\usepackage{mathrsfs, dsfont}

\usepackage[]{hyperref}
\hypersetup{
colorlinks=true,       
linkcolor=red,          
citecolor=blue,        
filecolor=red,      
urlcolor=cyan           
}

\def\div{\mathop{\rm div}\nolimits}    
\def\curl{\mathop{\rm curl}\nolimits}    
 

\def\Lip{\mathop{\rm Lip}\nolimits}


\newcommand{\be}{\begin{equation}}
\newcommand{\ee}{\end{equation}}

\title{No anomalous dissipation in two-dimensional incompressible fluids}

\author{Luigi De Rosa}
\address{Department Mathematik Und Informatik, Universit\"at Basel, CH-4051 Basel, Switzerland}
\email{luigi.derosa@unibas.ch}

\author{Jaemin Park}
\address{Department Mathematik Und Informatik, Universit\"at Basel, CH-4051 Basel, Switzerland}
\email{jaemin.park@unibas.ch}

\date{\today}

\subjclass[2020]{76D05 - 35D30 - 76F02 - 28C05}
\keywords{Anomalous dissipation - incompressible fluids - Vortex Sheets - concentration compactness}
\thanks{\textit{Acknowledgements}. L.D.R. acknowledge  the support of the SNF grant FLUTURA: Fluids, Turbulence, Advection No. 212573, while the work of J.P. has been funded by the SNSF Ambizione grant No. 216083
}

\begin{document}

\begin{abstract}
We prove that any sequence of vanishing viscosity Leray--Hopf solutions to the periodic two-dimensional incompressible Navier--Stokes equations does not display anomalous dissipation if the initial vorticity is a measure with positive singular part. A key step in the proof is the use of the Delort--Majda concentration-compactness argument to exclude formation of atoms in the vorticity measure, which in particular implies that the limiting velocity is an admissible weak solution to Euler. This is the first result proving absence of dissipation in a class of solutions in which the velocity fails to be strongly compact in $L^2$, putting two-dimensional turbulence in sharp contrast with respect to that in three dimensions. Moreover, our proof reveals that the amount of energy dissipation can be bounded by the vorticity measure of a disk of size $\sqrt \nu$, matching the two-dimensional Kolmogorov dissipative length scale which is expected to be sharp. 
\end{abstract}

\maketitle

\section{Introduction}

In the two-dimensional spatially periodic setting $\T^2\times (0,T)$ we will consider the incompressible Euler equations
\begin{equation}\label{E}
\left\{\begin{array}{l}
\partial_t u + \div (u\otimes u) +\nabla p=0 \\
\div u=0\\
u(\cdot,0)=u_0.
\end{array}\right.
\end{equation}
Recall the  notion of weak solution.
\begin{definition}[Euler weak solutions]
\label{D:E weak sol}
Let $u_0\in L^2(\T^2)$ be a given incompressible vector field. We say that $u\in L^2(\T^2\times [0,T])$ is a weak solution to \eqref{E} if $\div u=0$ and 
$$
\int_0^T \int_{\T^2} \left(u\cdot \partial_t\varphi +u\otimes u :\nabla \varphi \right) \,dxdt =-\int_{\T^2} u_0 \cdot \varphi (x,0)\,dx\qquad \forall \varphi\in C^1_c (\T^2\times [0,T)), \, \div \varphi=0.
$$
\end{definition}
In what follows we will denote by $E_u:[0,T]\rightarrow [0,\infty]$ the kinetic energy of $u$, that is 
$$
E_u(t):=\frac{1}{2}\int_{\T^2} |u(x,t)|^2\,dx.
$$
In view of the celebrated Kolmogorov Theory of Turbulence \cite{K41}, and the subsequent Onsager ideal picture \cite{O49}, in recent years a huge mathematical effort has been put in the study of weak solutions to the Euler equations which fail to conserve the kinetic energy. In this context, independently on the space dimension, we nowadays know the critical regularity to be $ L^3_tB^{\sfrac{1}{3}}_{3,\infty}$: if $u \in L^3_tB^{\sfrac{1}{3}+}_{3,\infty}$ then the kinetic energy has to be conserved \cites{E94,CET94,CCFS08,MR4691856} while having $u\in L^3_tB^{\sfrac{1}{3}-}_{3,\infty}$ is in general compatible with energy dissipation \cites{Is18,BDSV19,GR23}. The latter works were built on the convex integration techniques introduced by De Lellis and Sz\'ekelyhidi \cite{DS13} in the context of incompressible turbulent flows. 

Remarkably, as it has been first noted in \cite{CLLS16}, the two-dimensional case is quite special if we restrict to solutions of Euler arising as vanishing viscosity limits (such solutions are often called \emph{physically realizable solutions}), and in particular the Onsager critical scaling can be overcome. In order to give the precise statements we recall the incompressible Navier--Stokes system with viscosity $\nu>0$
\begin{equation}\label{NS}
\left\{\begin{array}{l}
\partial_t u^\nu + \div (u^\nu\otimes u^\nu) +\nabla p^\nu=\nu \Delta u^\nu \\
\div u^\nu =0\\
u^\nu(\cdot,0)=u^\nu_0,
\end{array}\right.
\end{equation}
on $\T^2\times (0,T)$. Here the natural class of weak solutions is the so-called Leray--Hopf solutions.

\begin{definition}[Leray--Hopf weak solutions]\label{LHsol}
Let $u^\nu_0\in L^2(\T^2)$ be a given incompressible vector field and $\nu>0$. We say that $u^\nu \in L^\infty ([0,T];L^2(\T^2))\cap L^2([0,T];H^1(\T^2))$ is a Leray--Hopf weak solution to \eqref{NS} if $\div u^\nu =0$,
\begin{align}\label{weak_formulation_NS}
\int_0^T \int_{\T^2} \left(u^\nu \cdot \partial_t\varphi +u^\nu\otimes u^\nu  :\nabla \varphi + \nu u^\nu \cdot \Delta \varphi\right) \,dxdt =-\int_{\T^2} u^\nu_0 \cdot \varphi (x,0)\,dx,
\end{align}
holds for any $\varphi\in C^1_c (\T^2\times [0,T))$ such that $ \div \varphi=0$ and in addition 
\begin{equation}\label{Leray en ineq}
E_{u^\nu} (t)+ \nu \int_0^t \int_{\T^2} |\nabla u^\nu (x,s)|^2 \,dxds\leq E_{u^\nu_0} \qquad \text{for a.e. } t\in [0,T].
\end{equation}
\end{definition}
The existence of such solutions is due to Leray \cite{L34} (later on refined by Hopf \cite{Hopf51}), whose strategy directly applies to both the two and the three-dimensional case. It is well known \cite{RR} that in the two-dimensional setting Leray--Hopf solutions are unique and become instantaneously smooth, i.e. $u^\nu\in C^\infty(\T^2\times (0,T])$. Moreover, the energy inequality \eqref{Leray en ineq} upgrades to an equality. Remarkably, non-uniqueness in $3$ dimensions seems to be expected \cites{JS14,JS15} and recently fully proved with an additional forcing term \cite{ABC22}.

In this setting, perhaps the only physically relevant one, the term \emph{anomalous dissipation} refers to 
\begin{equation}\label{anom dissip}
\liminf_{\nu\rightarrow 0} \nu \int_0^T \int_{\T^2} |\nabla u^\nu (x,t)|^2 \,dxdt>0.
\end{equation}
Such phenomenon has a quite strong experimental and numerical evidence and we refer the interested reader to the monograph \cite{Frisch95} for an extensive physical and historical overview on the theory of turbulent flows. Our purpose in this paper is to rule out the possibility of \eqref{anom dissip} even when the initial data is sufficiently rough in the two-dimensional setting.

In view of the energy balance \eqref{Leray en ineq}, it is evident that, as soon as $\{u_0^\nu\}_\nu$ is bounded in $L^2$, the kinetic energy is always bounded independently of $\nu$. This allows us to extract a subsequence of $\{u^{\nu}\}_\nu$ which converges weakly to some $u$ in $L^\infty_tL^2_x$. For such a sequence, one can easily deduce  that $u$ must satisfy $E_u(t)\le E_{u_0}$ for a.e. $t$. In a common terminology such solutions are called \emph{admissible}.
\begin{definition}[Admissible Euler weak solutions]\label{D: E admiss sol}
Let $u_0\in L^2(\T^2)$ be a given incompressible vector field. We say that a weak solution $u$ to \eqref{E} is admissible if 
\begin{equation}
\label{amiss cond}
E_u(t) \leq E_{u_0} \qquad \text{for a.e. } t\in [0,T]. 
\end{equation}
In particular $u\in L^\infty([0,T];L^2(\T^2))$.
\end{definition}
However even if anomalous dissipation does not happen, it is not trivial at all  whether the inequality in \eqref{amiss cond} can be  replaced by equality, unless $\{u^\nu\}_\nu$ converges strongly in $L^2_{x,t}$. Indeed, in general, the absence of anomalous dissipation does not necessarily guarantee the conservation of the energy of the limit. A version of a converse implication follows trivially; if $u$ is a solution to Euler arising from vanishing viscosity and it conserves the energy, then there is no anomalous dissipation. However before investigating the connection between  energy conservation and anomalous dissipation, one should keep in mind  that justifying whether $u^\nu\to u$ solves the Euler equations is highly nontrivial in any setting in which strong $L^2_{x,t}$ compactness is not accessible.  We first present the available results regarding weak solutions to the Euler equations, and then discuss relevant findings concerning the energy conservation of these solutions.

\subsection{Weak solution \&  energy conservation for Euler.}\label{ifyoufindthis_emailthe_authors}
 Apart from  kinetic energy, the two-dimensional fluids hold another conserved quantity: the vorticity. Throughout this whole note, we will denote  the vorticity  by $\omega^\nu:=\curl u^\nu$.  
 
  In the literature, weak solutions to the Euler equations have been constructed under additional assumptions on the initial vorticity $\omega_0$. For $\omega_0\in L^\infty$, Yudovich \cite{yudovich1963non} proved existence and uniqueness of global weak solutions. Afterwards, the existence result was extended for $\omega_0\in L^p$ with $p>1$ by DiPerna and Majda \cite{diperna1987concentrations}.  Remarkably, Delort \cite{delort1991existence} constructed global weak solutions for initial data of vortex sheet type, that is, $\omega_0\in \mathcal{M}$, where $\mathcal{M}$ denotes the space of Radon measures, assuming that the singular part of $\omega_0$ has a distinguished sign.  We also refer to \cites{evans1994hardy,vecchi19931,scho95} for different proofs and further extensions. On the opposite side, the $L^p$ non-uniqueness in the presence of an external forcing term has been proved by Vishik in the seminal papers \cites{V1,V2}. We also refer to the recent result \cite{FM23} proving sharpness of the Yudovich's assumptions for (unforced) weak-strong uniqueness.

 Concerning the energy conservation of two-dimensional inviscid fluids,  the Onsager criterion $u\in B^{\sfrac{1}{3}}_{3,\infty}$ can be translated into the integrability of the vorticity, $\omega\in L^{\sfrac{3}{2}}$, noting that these two spaces share the same scaling invariance (these spaces often called \emph{Onsager critical spaces}). Indeed, the aforementioned result in \cite{CCFS08} immediately implies that if $\omega\in L^\infty_{t}L_x^{\sfrac{3}{2}}$, then the energy must be conserved whether the solution is coming from vanishing viscosity or not. However vanishing viscosity solutions certainly exhibit a special feature.  In \cite{CLLS16}, it has been proved (see also \cite{LMP21} for further improvements) that, if $\omega_0\in L^p$ for $p>1$, then anomalous dissipation  cannot happen. Moreover, since $p>1$, the sequence of velocities $\{u^\nu\}_\nu$ is strongly compact in $L^2_{x,t}$, which in particular implies, up to possibly considering a subsequence, that the limit is a weak solution to Euler with constant kinetic energy. See for instance the simple \cref{P:en cons and compact} (or \cite{LMP21}*{Theorem 2.11} for a more general argument) for the connection between energy conservation and the strong convergence of velocity.
  
  \subsection{Main result and remarks}
Our main result establishes that anomalous dissipation cannot occur not only in the full range $p\ge 1$ but also when considering measure initial vorticities, provided their singular part holds distinguished sign. To be precise, we prove the following
\begin{theorem}\label{T:main}
Let $\{u^\nu_0\}_{\nu>0} \subset L^2(\T^2)$ be a sequence of divergence-free vector fields satisfying
\begin{enumerate}[label=(H\arabic*)]
\item\label{H1} $\left\{u_0^\nu\right\}_{\nu>0}$ is strongly compact in $L^2(\mathbb{T}^2)$;
\item\label{H2}   $\{\omega^\nu_0\}_{\nu>0}$ is bounded in $\mathcal{M}(\mathbb{T}^2)$ and it  admits a decomposition $\omega_0^\nu =f^\nu_0 + \Omega_0^\nu$ such that  $\left\{ f^\nu_0\right\}_{\nu>0}$ is weakly compact in $L^1(\mathbb{T}^2)$ and  $\Omega^\nu_0 \geq 0$.
\end{enumerate}
Let $\{u^\nu\}_{\nu>0}$ be the corresponding sequence of Leray--Hopf solutions to \eqref{NS} with $\nu\to 0$. Then 
\begin{enumerate}[label=(T\arabic*)]
\item\label{T1} there exists a subsequence $\{u^{\nu_n}\}_n$ such that $u^{\nu_n} \rightharpoonup^* u$ in $L^\infty([0,T];L^2(\T^2))$, $u^{\nu_n}_0\rightarrow u_0$ in $L^2(\T^2)$ and $u$ is an admissible weak solution to Euler with initial datum $u_0$. In addition, if $\omega:=\curl u$, it holds 
$$
\|\omega(t)\|_{\mathcal{M}(\T^2)}\leq \sup_{\nu>0}\|\omega^\nu_0\|_{\mathcal{M}(\T^2)}\qquad \text{for a.e. } t\in [0,T];
$$
\item\label{T2} the full sequence does not display anomalous dissipation, i.e.
\begin{equation}
    \label{no_an_diss_intro}
    \limsup_{\nu\rightarrow 0} \nu\int_0^T\int_{\T^2} |\nabla u^\nu (x,t)|^2\,dxdt=0.
\end{equation}
\end{enumerate}
\end{theorem}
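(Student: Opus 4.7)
The plan is to prove \ref{T1} via a Delort--Majda concentration-compactness argument for the vorticity, and to deduce \ref{T2} from a quantitative estimate bounding the enstrophy dissipation by the local mass of $|\omega^\nu|$ on disks of radius $\sqrt{\nu}$ -- precisely the 2D Kolmogorov scale mentioned in the abstract.

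First, from \ref{H1} and the Leray energy inequality \eqref{Leray en ineq}, the family $\{u^\nu\}$ is uniformly bounded in $L^\infty_tL^2_x$, so I extract $u^{\nu_n}\rightharpoonup^* u$ along a subsequence. Since the vorticity equation
\[
\partial_t\omega^\nu + u^\nu\cdot\nabla\omega^\nu = \nu\Delta\omega^\nu
\]
is linear in $\omega^\nu$ once $u^\nu$ is fixed, I propagate the initial splitting from \ref{H2} by solving separately for $f^\nu$ (initial datum $f_0^\nu$) and $\Omega^\nu$ (initial datum $\Omega_0^\nu$). The maximum principle gives $\Omega^\nu(t)\ge 0$, while a de la Vall\'ee--Poussin/Orlicz entropy argument -- picking a convex $\Phi$ with superlinear growth such that $\sup_\nu\int\Phi(|f_0^\nu|)\,dx<\infty$ and exploiting that the divergence-free transport is skew-symmetric and the diffusion favourable in $\Phi$-entropy -- propagates the weak-$L^1$-compactness of $\{f^\nu(t)\}$ in time. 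This yields the uniform bound $\|\omega^\nu(t)\|_{\mathcal{M}(\T^2)}\le\|\omega_0^\nu\|_{\mathcal{M}(\T^2)}$ together with exactly the structural hypotheses needed to run Delort's concentration-compactness scheme on $u^\nu\otimes u^\nu$, so that the weak-$*$ limit $u$ is a weak Euler solution. Admissibility \eqref{amiss cond} then follows from lower semicontinuity of the $L^2$ norm, completing \ref{T1}.

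For \ref{T2}, the two-dimensional identity $\int|\nabla u^\nu|^2 = \int|\omega^\nu|^2$ for zero-mean divergence-free fields reduces the task to showing $\nu\int_0^T\|\omega^\nu(t)\|_{L^2}^2\,dt\to 0$. I would appeal to Nash--Aronson Gaussian upper bounds for the fundamental solution of the drift-diffusion equation with divergence-free drift $u^\nu$, yielding
\[
\|\omega^\nu(t)\|_{L^\infty}\le \frac{C}{\nu t}\,\phi^\nu(C\sqrt{\nu t}),\qquad \phi^\nu(r):=\sup_{x\in\T^2}|\omega_0^\nu|(B(x,r)).
\]
Combining with $\|\omega^\nu(t)\|_{L^1}\le\|\omega_0^\nu\|_{\mathcal{M}}$ and the change of variables $s=\sqrt{\nu t}$, this controls $\nu\int_0^T\|\omega^\nu\|_{L^2}^2\,dt$ by a time-weighted average of the initial vorticity mass on disks of radius comparable to $\sqrt{\nu}$, matching the Kolmogorov scale.

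The main obstacle is to show that this quantity vanishes as $\nu\to 0$, i.e.\ that $\phi^\nu$ has a uniform-in-$\nu$ modulus strong enough to absorb the residual time integral. For the $f_0^\nu$ piece, this amounts to sharpening the Dunford--Pettis equi-integrability of $\{f_0^\nu\}$ into a quantitative no-concentration bound on scale $\sqrt{\nu}$. For the $\Omega_0^\nu$ piece, one combines nonnegativity with the uniform $\dot H^{-1}$-bound on $\omega_0^\nu$ inherited from $u_0^\nu\in L^2$ (by \ref{H1}) to preclude ``uniform atoms'' -- a quantitative Delort-type statement at the initial-time level, whose extension to the evolved measure $\omega^\nu(t)$ relies on the linearity of the vorticity transport-diffusion. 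Reconciling these two moduli and matching them to the scale $\sqrt\nu$ is where the bulk of the technical work lies.
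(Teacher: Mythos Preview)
Your treatment of \ref{T1} matches the paper's: propagate the decomposition $\omega^\nu=f^\nu+\Omega^\nu$ by linearity of the vorticity equation, keep $\Omega^\nu\ge 0$ by the maximum principle and $\{f^\nu(t)\}$ equi-integrable via a de la Vall\'ee--Poussin function, then feed the inequality $|\omega^\nu|\le 2|f^\nu|+\omega^\nu$ into Delort's concentration-compactness.

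For \ref{T2} your route is genuinely different and runs into two obstructions. First, the refined estimate $\|\omega^\nu(t)\|_{L^\infty}\le \frac{C}{\nu t}\phi^\nu(C\sqrt{\nu t})$ would require a Gaussian \emph{localization} of the fundamental solution, $p^\nu(t,x,y)\le \frac{C}{\nu t}e^{-|x-y|^2/(C\nu t)}$, with $C$ independent of the drift $u^\nu$. This fails without size control on $u^\nu$ (already a constant drift $V$ centers the kernel at $y+Vt$, not at $y$); Nash iteration with a merely divergence-free drift gives only the global decay $\|\omega^\nu(t)\|_{L^\infty}\le \frac{C}{\nu t}\|\omega_0^\nu\|_{\mathcal M}$, which after $L^1$--$L^\infty$ interpolation leaves $\nu\int_\delta^T\|\omega^\nu\|_{L^2}^2\,dt\lesssim\log(T/\delta)$, bounded but not small. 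Second, even granting your bound, the change of variables yields $\nu\int_0^T\|\omega^\nu\|_{L^2}^2\,dt\lesssim\int_0^{C\sqrt{\nu T}}\phi^\nu(s)\,\frac{ds}{s}$, so one needs a uniform Dini condition on $\phi^\nu$; hypotheses \ref{H1}--\ref{H2} only yield $\phi^\nu(s)\to 0$, not a rate, and the integral may well diverge at $s=0$.

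The paper sidesteps both issues by splitting $(0,T)=(0,\delta)\cup(\delta,T)$ and treating the two pieces by completely different mechanisms. On $(\delta,T)$ it never returns to the initial data: it mollifies the \emph{evolved} vorticity $\omega^\nu(t)$ at scale $\eta\sqrt\nu$, bounds the smooth part by $\eta^{-2}\sup_{x_0}\int_{B_{\eta\sqrt\nu}(x_0)}|\omega^\nu(x,t)|\,dx$ (which vanishes as $\nu\to 0$ by the non-concentration statement already obtained for \ref{T1}), and controls the mollification remainder via an additional a~priori estimate $\nu^2\int_\delta^T\|\nabla\omega^\nu\|_{L^2}^2\,dt\le C_{\delta,T}$ which replaces any heat-kernel localization. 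On $(0,\delta)$ it uses \ref{T1} itself: the limit $u$ being an admissible Euler solution forces $t\mapsto E_u(t)$ to be right-continuous at $t=0$, and the energy balance together with weak lower semicontinuity then makes $\nu\int_0^\delta\|\nabla u^\nu\|_{L^2}^2\,dt$ small. Your outline is missing both this short-time argument and the second-order vorticity estimate.
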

Some remarks are in order. First, the proof of \ref{T1} relies on the celebrated concentration-compactness result by Delort \cite{delort1991existence} in the full space $\R^2$. The original proof by Delort deals with a direct regularization of the initial datum in the Euler equations. Then, still in $\R^2$,  Majda \cite{majda1993remarks} proved  that initial vorticities of distinguished sign $\omega^\nu_0\geq 0$ allow to construct admissible weak solutions to Euler from vanishing viscosity sequences of Leray--Hopf solutions to the Navier--Stokes equations. In this context, our claim \ref{T1} is nothing but a slight extension of such results since having a vorticity with distinguished sign would not be compatible with being in our setting (the argument in \cite{majda1993remarks} fails due to sign-changing vorticity which is unavoidable in periodic setting), and the proof in \cite{delort1991existence} is not directly adaptable in the vanishing viscosity context. However, the essence of all proofs, including ours presented in this paper, lies in the phenomenon that the absolute vorticity $|\omega^\nu|$ does not develop atoms  as $\nu\to 0$, which was first observed by Delort \cite{delort1991existence}. For this reason we may refer to this part of the proof as a \quotes{Delort--Majda} argument, which is the aim of Section~\ref{D_M_argu}.  We also refer to \cite{Gerard92} for a description/extension of the Delort ideas, with particular emphasis on the very last remark of the work.

Second, our proof of \ref{T2} is divided in two parts: we first prove that there is no anomalous dissipation in time interval $(\delta,T)$ for any $\delta>0$, and then that the same holds in $(0,\delta)$. The first part relies on a careful quantitative estimate \eqref{sym2} on the $L^2$ norms of $\omega^\nu$ and its gradient $\nabla \omega^\nu$ which reduces the problem to show that no atomic vorticity concentration can happen in the sequence $\{ |\omega^\nu|\}_\nu$. Then,  prohibiting anomalous dissipation to happen in $(0,\delta)$ is a consequence of the Delort--Majda argument of \ref{T1}, which ensures that the kinetic energy of any weak limit of $\{u^\nu\}_\nu$ is continuous from the right in $t=0$ (see \cref{L:energy right cont}). Let us also specify that in this second step the strong compactness of the initial velocities in \ref{H1} is used in the argument to conclude the absence of dissipation in $(0,\delta)$, which fails if only weak $L^2$ compactness is assumed. However, as stated in \ref{H2}, no strong compactness of the initial vorticities is necessary. The proof of \ref{T2} will be given in Section~\ref{NAD}. Some additional remarks about the proof, together with an extension of our main result, will be then given in \cref{S:further remarks}.

\begin{remark}A physically interesting feature arising in the proof of \cref{T:main} is that the total dissipation can be bounded by the amount of vorticity measure contained in a disk of size proportional to $\sqrt{\nu}$, which coincides with the dissipative length scale of Kolmogorov in the two-dimensional setting (see for instance \cite{Dav_turb}). Thus, the bounds obtained in our proof are presumably sharp. Further remarks about the appearance of the Kolmogorov dissipative length scale will be discussed in \cref{S:further remarks}.
\end{remark}

We emphasize that our theorem does not rely on strong $L^2$ compactness of the Leray--Hopf solutions. As proved in \cite{LMP21}*{Theorem 2.11}, a compactness of $\left\{ u^\nu\right\}_\nu$ in $L^2$ is equivalent to the the energy conservation of the limit $u=\lim_{\nu\to 0}u^\nu$. If $\{\omega^\nu\}_\nu$ stays bounded in $ L^p$ for some $p>1$, standard embedding theorems ensure that $\left\{u^\nu\right\}_\nu$ is strongly compact in $L^2$. However to the best of our knowledge, the energy conservation in the vanishing viscosity limit remains an open question for any space of vorticity in which $L^2$ strong compactness of $\left\{ u^\nu\right\}_\nu$ is not guaranteed (e.g. $\omega\in L^1$).  In our setting, the vorticity is merely a measure, and  our theorem seems to be the first one proving the absence of dissipative anomaly in a class of solutions which fails to compactly embed in $L^2$. {This provides a sharp difference between two and three dimensional turbulent flows, since in the latter there seems to be no hope to disprove anomalous dissipation without strong compactness.}  Note also that, in general, weak solutions to Euler with initial vorticity $\omega_0\not \in L^1$ do not conserve the kinetic energy \cite{BC_lorenz}, even if they are admissible \cite{Modena_hardy}. To conclude, we believe to be quite interesting that it is possible to prove no anomalous dissipation in the full range in which global weak solutions to Euler are known to exist. 

\subsection*{Data Availability \& Conflict of Interest Statements} Data sharing not applicable to this article as no datasets were generated or analysed during the current study.  All authors declare that they have no conflicts of interest.


\section{Auxiliary tools}
Here we collect some standard tools which will be used in this work. First let us specify several conventional notations.

\subsection{Notations} We denote by $C$ a universal positive constant which may vary from line to line. In case such an implicit constant depends on a quantity, let say $A$, and its dependence is necessary to be denoted, we will write $C_A$.  

For any $1\leq p\leq \infty$ we will use the standard notation $L^p(\Omega)$ and $\mathcal{M}(\Omega)$ to denote the classical spaces of $p$-integrable functions and finite Radon measures respectively, over a measurable set $\Omega$. The respective norms will be denoted by $\| \cdot \|_{L^p}$ and $\| \cdot \|_{\mathcal{M}}$. For any time-dependent function with values in a Banach space $Y$ we will write 
\begin{align}
f:[0&,T]\rightarrow Y\\
&t\mapsto f(t).
\end{align}
The corresponding norm in $Y$ for a fixed $t$-time slice will be denoted by $\|f(t)\|_{Y}$. In case any ambiguity might arise, we will write $f_t$ instead of $f(t)$. Consequently, the time dependent Bochner space $L^p([0,T];Y)$ is the set of strongly measurable $f:[0,T]\rightarrow Y$ for which the real valued function $t\mapsto \|f(t)\|_Y$ belongs to $L^p([0,T])$.

\subsection{Weak compactness criterion}
 A set of integrable functions $\mathcal{F}\subset L^1(\mathbb{T}^2)$ is said to be equi-integrable if for any $\eps>0$, there exists $\delta>0$ such that
\[
|A|< \delta\quad  \implies \quad  \sup_{f\in \mathcal{F}}\int_{A}|f(x)|\,dx < \eps.
\]
Here $A\subset \T^2$ is any Borel set and $|A|$ denotes its Lebesgue measure. The Dunford-Pettis theorem guarantees that $\mathcal{F}\subset L^1(\mathbb{T}^2)$ is weakly compact in $L^1$ if and only if $\mathcal{F}$ is equi-integrable. The following lemma provides another criterion concerning the weak $L^1$ topology. 
\begin{lemma}[\cite{K08}*{Theorem 6.19}]\label{Klenke} A set of functions $\mathcal{F}\in L^1(\mathbb{T}^2)$ is equi-integrable if and only if there exists a convex even function $\beta:\R\rightarrow [0,\infty)$, monotone increasing on $[0,\infty)$, and with $\lim_{s\to \pm \infty}\sfrac{\beta(s)}{|s|}= +\infty$ such that
\[
\sup_{f\in \mathcal{F}}\int_{\mathbb{T}^2}\beta(f(x))\,dx <\infty.
\]
\end{lemma}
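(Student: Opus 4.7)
The plan is to prove both implications of the criterion directly, exploiting finiteness of the measure on $\T^2$. Note that equi-integrability alone implies a uniform $L^1$ bound on $\mathcal{F}$: covering $\T^2$ by finitely many Borel sets each of measure less than the $\delta$ associated to $\eps=1$, one gets $\sup_{f\in\mathcal{F}}\|f\|_{L^1}<\infty$; this will be used tacitly below.

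\textbf{Sufficient direction.} Assume $M_0 := \sup_{f\in\mathcal{F}}\int_{\T^2}\beta(|f|)\,dx < \infty$ and fix $\eps>0$. Using $\beta(s)/s\to \infty$, choose $R$ so that $\beta(s) \geq (2M_0/\eps)\,s$ for $s \geq R$. Then for every Borel $A\subset \T^2$,
\[
\int_A |f|\,dx = \int_{A\cap\{|f|\leq R\}}|f|\,dx + \int_{A\cap\{|f|>R\}}|f|\,dx \leq R|A| + \frac{\eps}{2M_0}\int_{\T^2}\beta(|f|)\,dx \leq R|A| + \frac{\eps}{2}.
\]
Taking $\delta := \eps/(2R)$ yields equi-integrability. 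This direction is essentially immediate.

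\textbf{Necessary direction.} Here the work is to construct $\beta$ as the antiderivative of a suitable nondecreasing step function: slowly growing enough to preserve uniform integrability of $\beta(|f|)$ over $\mathcal{F}$, yet growing enough to guarantee superlinearity. The key quantitative input comes from equi-integrability combined with the uniform $L^1$ bound: since $|\{|f|>t\}|\leq \|f\|_{L^1}/t$ tends to zero uniformly as $t\to\infty$, equi-integrability lets us select an increasing sequence of integers $t_k \uparrow \infty$ with
\[
\sup_{f\in\mathcal{F}}\int_{\{|f|>t_k\}}|f|\,dx \leq 2^{-k}, \qquad k\geq 1.
\]
Then I would define
\[
g(t) := \sum_{k=1}^\infty \mathbf{1}_{(t_k,\infty)}(t), \qquad \beta(s) := \int_0^s g(t)\,dt.
\]
The function $g$ is nondecreasing, nonnegative, and integer-valued with $g(t)\to\infty$, so $\beta$ is convex and monotone increasing; since $\beta'(s)=g(s)\to \infty$, convexity forces $\beta(s)/s \to \infty$ as $s\to \infty$.

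\textbf{Uniform bound and conclusion.} A layer-cake / Fubini computation yields, for every $f\in\mathcal{F}$,
\[
\int_{\T^2}\beta(|f|)\,dx = \int_0^\infty g(t)\,|\{|f|>t\}|\,dt = \sum_{k=1}^\infty \int_{t_k}^\infty |\{|f|>t\}|\,dt = \sum_{k=1}^\infty \int_{\{|f|>t_k\}}(|f|-t_k)\,dx \leq \sum_{k=1}^\infty 2^{-k} = 1,
\]
uniformly in $\mathcal{F}$, which is the desired bound. The principal technical obstacle is the simultaneous balancing of three requirements on $\beta$ — convexity, superlinearity at infinity, and the uniform integral bound — and the step-function antiderivative above is precisely what makes all three compatible: convexity follows from monotonicity of $g$, superlinearity from $g(s)\to\infty$, and the uniform bound from the geometric decay $2^{-k}$ built into the choice of $\{t_k\}$.
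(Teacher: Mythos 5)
Your proof is correct. The paper does not prove this lemma at all---it is quoted from Klenke's textbook (Theorem 6.19 there), and your argument is precisely the classical de la Vall\'ee-Poussin construction that the cited reference uses: the easy direction via the superlinear lower bound $\beta(s)\ge Cs$ for large $s$, and the hard direction by building $\beta$ as the antiderivative of a nondecreasing step function whose jump points $t_k$ are chosen so that $\sup_{f\in\mathcal F}\int_{\{|f|>t_k\}}|f|\le 2^{-k}$, with the layer-cake identity closing the estimate. The only cosmetic point is that your $\beta$ vanishes on $[0,t_1]$ and is therefore only nondecreasing there; adding the identity $s\mapsto s$ (harmless, since $\mathcal F$ is bounded in $L^1$) makes it strictly increasing if that reading of ``monotone increasing'' is intended.
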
 
The function $\beta$ can be chosen to be smooth by a standard approximation argument.

\subsection{Remarks on the Euler and Navier--Stokes equations} Here we recall some well known facts about weak solutions to the incompressible Euler equations. We start with the following.
\begin{remark}[Admissible solutions are $C^0_tL^2_{\rm w}$]\label{R:admiss are cont}
Let $u$ be an admissible weak solution to Euler in the sense of \cref{D: E admiss sol}. It is well known that up to redefine $u$ on a Lebesgue negligible set of times we have in addition $u\in C^0([0,T];L^2_{\rm w} (\T^2))$, where $L^2_{\rm w}$ denotes the space of $L^2$ vector fields endowed with the usual weak topology. For a proof of this statement see for instance \cite{DS10}*{Lemma 2.2} and the proof of \cite{DS10}*{Lemma 7.1}. In particular, by taking the continuous in time representative, the kinetic energy is well-defined at every time $t$ and by lower semicontinuity of the $L^2$ norm under weak convergence we can upgrade \eqref{amiss cond} to hold for all times $t\in [0,T]$.
\end{remark}
Then, it is a straightforward consequence of the above remark that the kinetic energy of $u$ becomes right continuous in $t=0$.
\begin{lemma}
\label{L:energy right cont}
Let $u_0\in L^2(\T^2)$ be a given divergence-free initial datum and $u\in C^0([0,T];L^2_{\rm w} (\T^2))$ be an admissible weak solution to \eqref{E} according to \cref{D: E admiss sol}. Then its kinetic energy $E_u$ is continuous from the right at $t=0$.
\end{lemma}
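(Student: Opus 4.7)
The plan is to combine the admissibility inequality with the weak $L^2$ continuity in time guaranteed by \cref{R:admiss are cont}, and sandwich the one-sided limit of $E_u$ at $t=0$ between $E_{u_0}$ from above and below.

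First, I would invoke \cref{R:admiss are cont} to work with the continuous representative $u \in C^0([0,T]; L^2_{\rm w}(\T^2))$, so that $u(t) \rightharpoonup u(0) = u_0$ in $L^2(\T^2)$ as $t \to 0^+$. Since admissibility \eqref{amiss cond} then holds pointwise in time, in particular
$$
\limsup_{t\to 0^+} E_u(t) \le E_{u_0},
$$
which yields the upper bound for the right limit at $t=0$.

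Next, I would exploit lower semicontinuity of the $L^2$ norm with respect to weak convergence: from $u(t) \rightharpoonup u_0$ in $L^2(\T^2)$ as $t \to 0^+$, it follows that
$$
E_{u_0} = \tfrac{1}{2}\|u_0\|_{L^2(\T^2)}^2 \le \liminf_{t\to 0^+} \tfrac{1}{2}\|u(t)\|_{L^2(\T^2)}^2 = \liminf_{t\to 0^+} E_u(t).
$$
Combining the two inequalities gives
$$
E_{u_0} \le \liminf_{t\to 0^+} E_u(t) \le \limsup_{t\to 0^+} E_u(t) \le E_{u_0},
$$
hence the limit exists and equals $E_u(0) = E_{u_0}$, which is exactly right continuity at $t=0$.

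There is really no obstacle here: the entire argument is a two-line sandwich once one has (i) the pointwise-in-time admissibility and (ii) the weak-in-time continuity of $u$, both of which are recorded in \cref{R:admiss are cont}. The mild subtlety worth flagging is that the admissibility inequality is only guaranteed a.e. in $t$ in the original \cref{D: E admiss sol}, so one must pass to the continuous representative before claiming the upper bound $E_u(t) \le E_{u_0}$ for every $t$ near $0$; apart from that, the proof is immediate.
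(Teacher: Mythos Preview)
Your proof is correct and follows essentially the same approach as the paper: both combine the pointwise-in-time admissibility from \cref{R:admiss are cont} (giving the upper bound $\limsup E_u(t)\le E_{u_0}$) with lower semicontinuity of the $L^2$ norm under the weak-in-time continuity (giving $E_{u_0}\le\liminf E_u(t)$), yielding the sandwich. The only cosmetic difference is that the paper phrases the argument along an arbitrary sequence $t_n\to 0$, while you work directly with $\limsup$ and $\liminf$ as $t\to 0^+$.
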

\begin{proof}
Since $u\in C^0([0,T];L^2_{\rm w} (\T^2))$, by lower semicontinuity of the $L^2$ norm under weak convergence we have
$$
E_{u_0} \leq \liminf_{t_n\rightarrow 0} E_{u}(t_n).
$$
By \cref{R:admiss are cont}, the admissibility condition \eqref{amiss cond} also implies $E_u (t_n) \leq E_{u_0}$ for all $n\in \N$, from which we get
$$
E_{u_0} \leq \liminf_{t_n\rightarrow 0} E_{u}(t_n)\leq \limsup_{t_n\rightarrow 0} E_u (t_n)\leq E_{u_0},
$$
that is $\lim_{t_n\rightarrow 0} E_u(t_n)=E_{u_0}$.
\end{proof}

We conclude this section by recalling the following classical estimate for the two-dimensional Navier--Stokes equations. We include the proof for the reader's convenience.

\begin{proposition}
    \label{P:vort_est_2d}
    Let $\{u_0^\nu\}_{\nu>0}\subset L^2(\T^2)$ be divergence-free initial data and let $\{u^\nu\}_{\nu>0}$ be the corresponding sequence of Leray--Hopf solutions. Then 
    $$
\,\|\omega^\nu(t)\|_{L^2}\leq \frac{\|u^\nu_0\|_{L^2}}{\sqrt{2t \nu}}\qquad \forall t>0.
$$
\end{proposition}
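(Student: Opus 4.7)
The plan is to run the standard two-dimensional enstrophy-energy interpolation. Since Leray--Hopf solutions in dimension two are instantaneously smooth, the vorticity $\omega^\nu = \curl u^\nu$ satisfies the scalar advection--diffusion equation
\[
\partial_t \omega^\nu + u^\nu \cdot \nabla \omega^\nu = \nu \Delta \omega^\nu
\]
classically for $t>0$. Testing against $\omega^\nu$ and using $\div u^\nu = 0$ yields the enstrophy identity $\tfrac{1}{2}\tfrac{d}{dt}\|\omega^\nu(t)\|_{L^2}^2 = -\nu \|\nabla \omega^\nu(t)\|_{L^2}^2$.

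The second step is to trade the right-hand side for $\|\omega^\nu\|_{L^2}$ via the velocity energy. Writing $\omega^\nu = \div(u^\nu)^\perp$ (or equivalently integrating by parts after pairing $\omega^\nu$ with itself), one gets
\[
\|\omega^\nu(t)\|_{L^2}^2 = \int_{\T^2} \omega^\nu (\nabla^\perp \cdot u^\nu) \, dx = -\int_{\T^2} u^\nu \cdot \nabla^\perp \omega^\nu \, dx \leq \|u^\nu(t)\|_{L^2} \|\nabla \omega^\nu(t)\|_{L^2},
\]
so $\|\nabla \omega^\nu(t)\|_{L^2}^2 \geq \|\omega^\nu(t)\|_{L^2}^4 / \|u^\nu(t)\|_{L^2}^2$. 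The Leray energy inequality \eqref{Leray en ineq} gives $\|u^\nu(t)\|_{L^2} \leq \|u^\nu_0\|_{L^2}$, and thus
\[
\frac{d}{dt}\|\omega^\nu(t)\|_{L^2}^2 \leq -\frac{2\nu}{\|u^\nu_0\|_{L^2}^2}\|\omega^\nu(t)\|_{L^2}^4.
\]

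The final step is to solve this scalar ODE inequality. Setting $y(t) := \|\omega^\nu(t)\|_{L^2}^2$, one has $(1/y)' \geq 2\nu / \|u^\nu_0\|_{L^2}^2$, hence $1/y(t) \geq 2\nu t / \|u^\nu_0\|_{L^2}^2$ (the initial term $1/y(0)$ being nonnegative and possibly infinite if $\omega^\nu_0 \notin L^2$). Taking reciprocals and square roots gives the stated bound. The one point to handle with a little care is the regularization at $t=0$: since only $u^\nu_0 \in L^2$ is assumed, we may not have $\omega^\nu(0) \in L^2$, so one should start the ODE argument from an arbitrary time $t_0>0$ where smoothness and finiteness of $\|\omega^\nu(t_0)\|_{L^2}$ are guaranteed by parabolic regularization, and then let $t_0 \to 0^+$, using that $1/y(t_0) \geq 0$ uniformly. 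This is the only mild subtlety; otherwise the argument is a textbook computation.
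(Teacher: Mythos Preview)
Your proof is correct, but it takes a different route from the paper's. The paper's argument is shorter and avoids the ODE step entirely: from the enstrophy balance it only extracts that $s\mapsto\|\omega^\nu(s)\|_{L^2}^2$ is non-increasing, and then uses the identity $\|\omega^\nu(s)\|_{L^2}=\|\nabla u^\nu(s)\|_{L^2}$ together with the Leray energy inequality \eqref{Leray en ineq} to write
\[
t\,\nu\,\|\omega^\nu(t)\|_{L^2}^2 \;\leq\; \nu\int_0^t \|\omega^\nu(s)\|_{L^2}^2\,ds \;=\; \nu\int_0^t \|\nabla u^\nu(s)\|_{L^2}^2\,ds \;\leq\; \tfrac12\|u_0^\nu\|_{L^2}^2.
\]
So the key identity the paper exploits is $\|\omega^\nu\|_{L^2}=\|\nabla u^\nu\|_{L^2}$, which feeds the vorticity directly into the energy dissipation term; this sidesteps both the interpolation $\|\omega\|_{L^2}^2\leq\|u\|_{L^2}\|\nabla\omega\|_{L^2}$ and the Gr\"onwall step, and also makes the $t_0\to 0^+$ regularization issue disappear (the energy inequality already starts from $t=0$). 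Your approach is of course fine and is closer in spirit to the Gagliardo--Nirenberg/ODE argument used elsewhere in the paper (cf.\ Proposition~\ref{equi_integrability}), but for this particular statement the paper's monotonicity-plus-energy trick is the cleaner path.
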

\begin{proof}
    By the enstrophy balance on the vorticity formulation
$$
\frac{d}{dt} \|\omega^\nu(t)\|^2_{L^2} =- 2\nu \|\nabla \omega^\nu(t)\|^2_{L^2} \qquad \text{for all } t>0,
$$
we deduce that $t\mapsto \|\omega^\nu(t)\|^2_{L^2}$ is monotone non-increasing. Thus, the energy inequality \eqref{Leray en ineq}, together with $\|\omega^\nu(t)\|_{L^2}=\|\nabla u^\nu(t)\|_{L^2}$, implies
$$
t \nu\,\|\omega^\nu(t)\|^2_{L^2}\leq \nu \int_0^t \|\omega^\nu(s)\|^2_{L^2}\,ds=\nu \int_0^t \|\nabla u^\nu(s)\|^2_{L^2}\,ds\leq \frac{\|u^\nu_0\|^2_{L^2}}{2}.
$$
\end{proof}

\subsection{Advection-diffusion equation with $L^1$ initial data} Here we prove some useful quantitative estimates and weak compactness results for advection-diffusion equations with $L^1$ initial data.

We consider a  passive scalar $\theta^{\nu}\in C^\infty(\mathbb{T}^2\times [0,T))$ which solves
\begin{equation}\label{AD}
\left\{\begin{array}{l}
\partial_t\theta^\nu+ v^{\nu}\cdot\nabla \theta^\nu = \nu \Delta \theta^\nu \\
\theta^\nu(\cdot ,0)=\theta^\nu_0\in C^\infty(\mathbb{T}^2)
\end{array}\right.
\end{equation}

Throughout this section, we will assume that the velocity $v^\nu$ is smooth in $(x,t)$ and $\div v^\nu=0$. For the initial data, we will assume that 
\begin{align}\label{initial_scalar}
\sup_{\nu>0}\| \theta^\nu_0\|_{L^1}<\infty,
\end{align}
while no quantitative property is assumed on the velocity $v^\nu$.

By using the incompressibility condition of the vector field, one can easily see that  
 \begin{align}\label{uniform_bounds_1}
\| \theta^\nu(t)\|_{L^p}\le \| \theta^\nu_0\|_{L^p}\le \sup_{\nu>0} \| \theta_0^\nu\|_{L^p}, \qquad \forall t\ge0 \text{ and } \forall p\in [1,\infty].
\end{align}
In the following proposition, we derive slightly finer properties of the solutions. 
\begin{proposition}\label{equi_integrability}
Under the assumption \eqref{initial_scalar}, the following holds true.
\begin{enumerate}[label=(\arabic*)]
\item\label{worsttrip} If $\int_{\T^2} \theta_0^\nu =0$, the sequence of solutions $\left\{\theta^\nu\right\}_{\nu>0}$  satisfies
\begin{align}
\nu\int_{\delta}^T \| \theta^\nu(t)\|_{L^2}^2 \,dt\le C_{\delta,T}\sup_{\nu>0}\| \theta_0^\nu\|_{L^1}^2,\label{theta_es_fasnact1}\\
\nu^2\int_{\delta}^{T}\|\nabla\theta^\nu(t)\|_{L^2}^2 \,dt \le C_{\delta,T}\sup_{\nu>0}\| \theta^\nu_0\|_{L^1}^2,\label{theta_es_fasnact2}
\end{align}
for any $\delta>0$.
\item\label{Ilostairpods}
Assume, in addition, that $\left\{ \theta^\nu_0\right\}_{\nu>0}$ is weakly compact in $L^1(\T^2)$. Then the set of functions $\left\{ \theta^\nu(t)\right\}_{\nu>0}$ is equi-integrable, uniformly in time. More precisely, for any $\eps>0$, there exists $\delta>0$ such that if $A\subset \mathbb{T}^2$ satisfies $|A|< \delta$, then
\[
\sup_{t\ge 0,\, \nu>0}\int_{A}|\theta^\nu(x,t)| \,dx< \eps.
\]
\end{enumerate} 
\end{proposition}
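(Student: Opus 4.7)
For part \ref{worsttrip}, the plan is to combine the standard $L^2$ energy identity with the two-dimensional Nash inequality. Testing \eqref{AD} with $\theta^\nu$, the divergence-free condition on $v^\nu$ eliminates the transport term and yields $\tfrac{d}{dt}\|\theta^\nu(t)\|_{L^2}^2 = -2\nu \|\nabla \theta^\nu(t)\|_{L^2}^2$. Since $\div v^\nu = 0$ preserves the zero-mean assumption $\int_{\T^2}\theta_0^\nu = 0$, the Nash inequality on $\T^2$ gives $\|\theta^\nu\|_{L^2}^4 \le C \|\nabla \theta^\nu\|_{L^2}^2 \|\theta^\nu\|_{L^1}^2$, and together with the a priori bound $\|\theta^\nu(t)\|_{L^1} \le \|\theta_0^\nu\|_{L^1}$ from \eqref{uniform_bounds_1}, the function $y(t) := \|\theta^\nu(t)\|_{L^2}^2$ satisfies
\[
y'(t) \le -\frac{c\,\nu}{\|\theta_0^\nu\|_{L^1}^2}\, y(t)^2.
\]
Elementary integration of this Bernoulli-type inequality yields the decay $y(t) \le C \|\theta_0^\nu\|_{L^1}^2/(\nu t)$, and integrating once more over $[\delta,T]$ after multiplying by $\nu$ produces \eqref{theta_es_fasnact1}. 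For \eqref{theta_es_fasnact2}, I integrate the energy identity from $\delta/2$ to $T$ to obtain $\nu \int_{\delta/2}^T \|\nabla \theta^\nu\|_{L^2}^2\,dt \le \tfrac{1}{2}\|\theta^\nu(\delta/2)\|_{L^2}^2 \le C \|\theta_0^\nu\|_{L^1}^2/(\nu \delta)$ from the decay just proved, and then multiply by an additional factor of $\nu$.

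For part \ref{Ilostairpods}, the plan is a convex-entropy renormalization anchored on \cref{Klenke}. Applied to the weakly compact family $\{\theta_0^\nu\}$, the lemma produces a convex, monotone increasing $\beta : [0,\infty) \to [0,\infty)$ with $\beta(s)/s \to \infty$ and $M := \sup_\nu \int_{\T^2} \beta(|\theta_0^\nu|)\,dx < \infty$; the smoothing remark after \cref{Klenke} permits to further assume that the even extension $\tilde\beta(s) := \beta(|s|)$ is $C^2$ and convex on $\R$, at the cost of increasing $M$ by a harmless constant factor. The chain rule applied to the smooth solution $\theta^\nu$ then gives the renormalized identity
\[
\partial_t \tilde\beta(\theta^\nu) + v^\nu \cdot \nabla \tilde\beta(\theta^\nu) = \nu \Delta \tilde\beta(\theta^\nu) - \nu \tilde\beta''(\theta^\nu) |\nabla \theta^\nu|^2.
\]
Integrating over $\T^2$, the transport term vanishes by incompressibility of $v^\nu$, the Laplacian integrates to zero by periodicity, and the remaining contribution is nonpositive by convexity of $\tilde\beta$. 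Hence $t \mapsto \int_{\T^2} \tilde\beta(\theta^\nu(x,t))\,dx$ is nonincreasing, so $\sup_{t\ge 0,\,\nu>0} \int_{\T^2} \tilde\beta(\theta^\nu(x,t))\,dx \le M$, and the converse implication in \cref{Klenke} promotes this uniform bound to the uniform-in-time equi-integrability of $\{\theta^\nu(t)\}_{\nu>0}$.

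The only technical subtlety I expect is in the choice of $\tilde\beta$: one must arrange for it to be simultaneously smooth, convex on all of $\R$, and superlinear at infinity, while only marginally inflating $\int \tilde\beta(|\theta_0^\nu|)\,dx$. This is routine, handled by mollifying $\beta(|\cdot|)$ and, if necessary, adding a small quadratic correction near the origin to dominate the original function. Everything else is a direct application of Nash's inequality and of convex-entropy dissipation, and crucially neither argument uses quantitative information on $v^\nu$ beyond $\div v^\nu = 0$, which is precisely what will be needed in the subsequent application to $\theta^\nu = \omega^\nu$.
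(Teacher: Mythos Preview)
Your proposal is correct and follows essentially the same route as the paper. Both parts rest on the identical ingredients: for \ref{worsttrip}, the $L^2$ energy identity combined with the two-dimensional Nash/Gagliardo--Nirenberg inequality $\|\theta\|_{L^2}^2 \le C\|\theta\|_{L^1}\|\nabla\theta\|_{L^2}$ to obtain the Bernoulli-type decay $\|\theta^\nu(t)\|_{L^2}^2 \lesssim \|\theta_0^\nu\|_{L^1}^2/(\nu t)$; for \ref{Ilostairpods}, the de~la~Vall\'ee~Poussin function $\beta$ from \cref{Klenke} and the convex-entropy dissipation $\tfrac{d}{dt}\int \beta(|\theta^\nu|)\,dx \le 0$.

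The only cosmetic differences are: (i) for \eqref{theta_es_fasnact2} you evaluate the energy identity directly at $t=\delta/2$ and insert the pointwise decay bound, whereas the paper integrates the inequality $\nu\int_t^T\|\nabla\theta^\nu\|_{L^2}^2\,ds \le \|\theta^\nu(t)\|_{L^2}^2$ once more in $t$ over $[\delta/2,T]$ and applies Fubini; your version is marginally shorter. (ii) For \ref{Ilostairpods} you work with the smooth even extension $\tilde\beta(\theta^\nu)$ rather than $\beta(|\theta^\nu|)$, which sidesteps the (harmless) nondifferentiability of $|\cdot|$ at zero that the paper glosses over. Neither change alters the argument in substance.
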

We remark that the zero-average assumption on the initial data in \ref{worsttrip} is not necessary and it has been made for practical reasons in order to deal with a cleaner version of the Gagliardo-Nirenberg inequality on $\T^2$, which prevents lower order terms to appear in the estimates.
\begin{proof}
We give a proof for each item of the proposition separately.

\textsc{Proof of \ref{worsttrip}.} The proof is based on the argument in \cite{CLLS16}*{Section 3} with a slight modification. A standard $L^2$ estimate for the advection-diffusion equation \eqref{AD} yields
\begin{align}\label{energe_estimate}
\frac{1}{2}\frac{d}{dt}\| \theta^\nu(t)\|_{L^2}^2 = - \nu \| \nabla \theta^\nu(t)\|_{L^2}^2.
\end{align}
Note that since we assumed the initial data to have zero average, it follows that $\{\theta^\nu(t)\}_\nu$ stays average-free for all times $t>0$. Thus, by the Gagliardo-Nirenberg interpolation inequality we get
\[
\|  \theta^\nu(t)\|_{L^2}\le C \| \theta^\nu(t) \|_{L^1}^{\sfrac{1}{2}}\| \nabla \theta^\nu(t)\|_{L^2}^{\sfrac{1}{2}},
\]
yielding that $\| \nabla \theta^\nu(t)\|_{L^2}^2\ge C\| \theta^\nu(t)\|_{L^2}^{4}\| \theta^{\nu}(t)\|_{L^1}^{-2}$. Plugging this into \eqref{energe_estimate}, we see that
\begin{align}
\frac{1}{2}\frac{d}{dt}\| \theta^\nu(t)\|_{L^2}^2 &\le- C\nu \| \theta^\nu(t)\|_{L^1}^{-2}\| \theta^\nu(t)\|_{L^2}^{4} \le - C\nu \| \theta^\nu(t)\|_{L^2}^{4}\sup_{\nu>0}\| \theta^\nu_0\|_{L^1}^{-2},
\end{align}
where the last inequality is due to \eqref{uniform_bounds_1}.
Denoting by
\[
M:=\sup_{\nu>0} \| \theta^\nu_0\|_{L^1}^{2}<\infty \qquad \text{ and } \qquad  y(t):=\| \theta^{\nu}(t)\|_{L^2}^2,
\]
the above estimate for $\frac{d}{dt}\| \theta^\nu(t)\|_{L^2}^2$ can be simply written as $y'(t)\le -CM^{-1} \nu y(t)^{2}$.  Using Gr\"onwall's inequality in $\left[\sfrac{\delta}{2},T\right]$ we deduce that
 $$
 \frac{1}{y(t)}\geq   C M^{- 1}\nu \left( t-\frac{\delta}{2}\right).
 $$
 Hence, integrating this over $t\in [\delta,T]$ for an arbitrary $\delta>0$, we arrive at
\[
\int_\delta^{T} y(t)\,dt \le C_{\delta,T} M \nu^{-1}= C_{T,\delta}\nu^{-1}\sup_{\nu>0}\| \theta^\nu_0\|_{L^1}^2 .
\]
Multiplying by $\nu$ both sides, we obtain \eqref{theta_es_fasnact1}. In order to see \eqref{theta_es_fasnact2}, we integrate \eqref{energe_estimate} over the time interval $[t,T]$ for arbitrary $0<t<T$, yielding that
\[
\| \theta^\nu(T)\|_{L^2}^2 + 2 \nu\int_{t}^T \| \nabla \theta^\nu(s)\|_{L^2}^2 \,ds = \| \theta^\nu(t)\|_{L^2}^2.
\]
Hence, we have  $ \nu\int_{t}^T \| \nabla \theta^\nu(s)\|_{L^2}^2 \,ds\le \| \theta^\nu(t)\|_{L^2}^2$  for all $0<t<T$. We again integrate this in $t$ over $[\sfrac{\delta}{2},T]$ for an arbitrary $\delta>0$, which gives us
\begin{align}\label{nu_1}
\nu\int_{\sfrac{\delta}{2}}^{T}\int_{t}^{T}\| \nabla \theta^\nu(s)\|_{L^2}^2 \,ds dt \le \int_{\sfrac{\delta}{2}}^{T}\| \theta^{\nu}(t)\|_{L^2}^2 \,dt\le C_{\delta,T}\nu^{-1}\sup_{\nu>0}\| \theta^\nu_0\|_{L^1}^2,
\end{align}
where the last inequality is due to \eqref{theta_es_fasnact1}. Using Fubini's theorem, the left-hand side of this can be estimated as
\begin{align}
\nu\int_{\sfrac{\delta}{2}}^{T}\int_{t}^{T}\| \nabla \theta^\nu(s)\|_{L^2}^2 \,ds dt& = \nu\int_{\sfrac{\delta}{2}}^T\| \nabla\theta^\nu(s)\|_{L^2}^2\left(\int_{\sfrac{\delta}{2}}^{s}\,dt\right)ds\\
&= \nu\int_{\sfrac{\delta}{2}}^T\| \nabla\theta^\nu(s)\|_{L^2}^2\left(s-\frac{\delta}{2}\right)ds\\
&\ge \nu\int^{T}_{\delta}\| \nabla\theta^\nu(s)\|_{L^2}^2\left(s-\frac{\delta}{2}\right)ds\\
&\ge \nu \frac{\delta}{2} \int^{T}_{\delta}\| \nabla\theta^\nu(s)\|_{L^2}^2\,ds.
\end{align}
Combining this with \eqref{nu_1}, we obtain the estimate \eqref{theta_es_fasnact2}.

\textsc{Proof of \ref{Ilostairpods}.} Thanks to Lemma~\ref{Klenke} and the weak $L^1$ compactness of $\left\{\theta^\nu_0\right\}_\nu$, we find a convex function $\beta:\R\rightarrow[0,\infty)$ such that 
\begin{align}\label{weak_com}
\sup_{\nu>0}\int_{\mathbb{T}^2}\beta(\theta^\nu_0(x))\,dx <\infty. 
\end{align}
Using the advection-diffusion equation~\eqref{AD} and the incompressibility of $v^\nu$, we see that
\[
\frac{d}{dt}\int_{\mathbb{T}^2}\beta(\theta^\nu(x,t))\,dx = -\nu\int_{\mathbb{T}}\beta''(\theta^\nu(x,t))|\nabla \theta^\nu(x,t)|^2 \,dx\le 0,
\]
where the last inequality follows from the convexity of $s\mapsto \beta(s)$. Integrating in time, we get $\int_{\mathbb{T}^2}\beta(\theta^{\nu}(x,t))\,dx \le \int_{\mathbb{T}^2}\beta(\theta^{\nu}_0(x))\,dx$. Taking the supremum in $t,\nu$, we arrive at
\[
\sup_{t\ge 0,\,\nu>0}\int_{\mathbb{T}^2}\beta(\theta^{\nu}(x,t))\,dx \le \int_{\mathbb{T}^2}\beta(\theta^{\nu}_0(x))\,dx<\infty.
\]
Again applying Lemma~\ref{Klenke}, we obtain  the equi-integrability of $\left\{\theta^\nu\right\}_{\nu}$.
 \end{proof}


\section{A Delort--Majda argument}\label{D_M_argu}
In this section  we study the the behaviour as $\nu\rightarrow 0$ of Leray--Hopf solutions to \eqref{NS}, which will be denoted by $\{u^\nu\}_\nu$,  with initial data $\{u_0^\nu\}_\nu \subset L^2(\T^2)$ whose vorticity is a Radon measure. We recall the conventional notations  $\omega_0^\nu:=\curl u^\nu_0$ and $\omega^\nu:=\curl \omega^\nu$. 

In the first part of this section, we  investigate some properties of the Leray--Hopf solutions with the initial data satisfying \ref{H1} and \ref{H2}. Afterwards, we re-prove the existence of global admissible weak solutions to the Euler equation~\eqref{E} obtained by Delort \cite{delort1991existence} in the context of vanishing viscosity limit, which in turn generalizes Majda's proof \cite{majda1993remarks} in the case in which the initial vorticity has an $L^1$ part with non-distinguished sign.

\subsection{Leray--Hopf solutions to the Navier--Stokes equations} 
We denote $\rho_\alpha$ a usual mollifier, that is,
\begin{align}\label{mollifier_1}
\rho_\alpha(x):=\frac{1}{\alpha^2}\rho(\alpha^{-1}x), \qquad \text{ for some $\rho\in C^\infty_c(B_1(0))$, \,\, with } \int_{\mathbb{R}^2} \rho \,dx=1.
\end{align}
where $B_R(x)$ denotes the disk centered at $x$ with radius $R$.

 Our goal is to derive some estimates on $\{u^\nu\}_\nu$, uniform in viscosity, depending only on $\sup_{\nu>0} \|\omega_0^\nu\|_{\mathcal M}$. Note that, since the initial velocities are only assumed to be $L^2$, the sequence of Leray--Hopf solutions $\{u^\nu\}_\nu$, despite smooth for all strictly positive times, fails to be smooth up to $t=0$. Thus, to avoid getting into technicalities about renormalization properties of advection-diffusion equations with non-smooth vector fields, we will mollify the initial data and prove the corresponding estimates for the regularized solutions, which will be uniform in the mollification parameter. 
 
 We denote $ u^\nu_{0,\alpha}:=u^\nu_0*\rho_\alpha$ for some $\alpha>0$. Let $u^\nu_\alpha$ be the corresponding Leray--Hopf  solution to the Navier--Stokes equation with the initial data $u^\nu_{0,\alpha}$ and denote by $\omega^\nu_{0,\alpha}:=\curl u^\nu_{0,\alpha}$ and $\omega^\nu_\alpha:=\curl u^\nu_\alpha$. Using the properties of the mollifier, it immediately follows that
 \begin{align}\label{intial_bounds}
 \| u_{0,\alpha}^{\nu}\|_{L^2} \le \sup_{\nu>0}\| u_0^\nu\|_{L^2}\qquad \text{ and } \qquad  \| \omega^\nu_{0,\alpha}\|_{L^1}\le \sup_{\nu>0}\| \omega^\nu_0\|_{\mathcal{M}}.
 \end{align} 
 Since $u^\nu_{0,\alpha}$ is a smooth divergence-free vector field, the classical theory of the Navier--Stokes equation guarantees that $u^\nu_\alpha$ exists and remains smooth globally in time. Then the vorticity formulation of the Navier--Stokes equations reads
\[
\left\{\begin{array}{l}
\partial_t\omega^{\nu}_{\alpha} + u^{\nu}_{\alpha}\cdot\nabla \omega^{\nu}_\alpha = \nu \Delta \omega^\nu_\alpha \\
\omega^\nu_\alpha(\cdot, 0)=\omega^\nu_{0,\alpha}.
\end{array}\right.
\]
Using the bound of the initial vorticity in \eqref{intial_bounds}, it is trivial to see that
\begin{align}\label{L1estimate_1}
\| \omega^\nu_\alpha(t)\|_{L^1}\le \sup_{\nu>0}\| \omega^\nu_{0}\|_{\mathcal{M}}.
\end{align} 
Applying \eqref{theta_es_fasnact1} and \eqref{theta_es_fasnact2}  we also obtain
\begin{align}\label{omega_alpha}
\nu\int_{\delta}^{T}\| \omega^\nu_\alpha(t)\|_{L^2}^2 \,dt + \nu^2\int_\delta^{T}\| \nabla \omega^\nu_\alpha(t)\|_{L^2}^2\, dt \le C_{\delta,T}\| \omega^\nu_{0,\alpha}\|_{L^1}^2\le C_{\delta,T}\sup_{\nu>0}\| \omega^\nu_0\|_{\mathcal{M}}, 
\end{align}
for any $\delta>0$. Being a Leray--Hopf solution,  $u^\nu_\alpha$ will satisfy the energy balance \eqref{Leray en ineq}. Thus it is bounded in $L^\infty ([0,T];L^2 (\T^2))\cap L^2 ([0,T];H^1(\T^2))$ uniformly in $\alpha>0$ (for a fixed $\nu>0$).  Therefore, we can find a  subsequence of $\left\{ u_\alpha^\nu\right\}_\alpha$ that is converging to some $v^\nu$ strongly in $L^2 (\T^2\times [0,T])$, and also weakly  in $L^\infty ([0,T];L^2 (\T^2))\cap L^2 ([0,T];H^1(\T^2))$, as $\alpha \to 0$. Therefore, $v^\nu$ solves \eqref{weak_formulation_NS} with initial datum $u^\nu_0$ and by lower semicontinuity of the corresponding norms under weak convergence, $v^\nu$ is also a Leray--Hopf solution. By uniqueness $u^\nu=v^\nu$.  Then, taking $\alpha\to 0$ in \eqref{L1estimate_1} and  \eqref{omega_alpha}, we arrive at the following statement.
\begin{lemma}\label{estimate_lem}
The sequence of Leray--Hopf solutions $\{u^\nu\}_{\nu>0}$ enjoys 
\begin{align}\label{sym1}
\sup_{t\geq 0} \| \omega^\nu(t)\|_{L^1}&\le \sup_{\nu>0}\| \omega^\nu_0\|_{\mathcal{M}} 
\end{align}
and
\begin{align}\label{sym2}
\nu\int_{\delta}^{T}\| \omega^\nu(t)\|_{L^2}^2 \, dt + \nu^2\int_\delta^{T}\| \nabla \omega^\nu(t)\|_{L^2}^2 \, dt &\le  C_{\delta,T}\sup_{\nu>0}\| \omega^\nu_0\|_{\mathcal{M}}^2,\qquad \forall \delta>0.
\end{align}
\end{lemma}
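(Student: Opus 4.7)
The plan is to reduce the lemma to a direct application of the advection-diffusion estimates of \cref{equi_integrability} on the vorticity formulation, after a regularization of the initial data in order to legitimately manipulate the smooth velocity field $u^\nu$ up to $t=0$. Since $u^\nu_0$ is only in $L^2$, I would first mollify it as $u^\nu_{0,\alpha}:=u^\nu_0*\rho_\alpha$ and solve the Navier--Stokes equations with this smooth divergence-free datum. Classical theory then gives a globally smooth Leray--Hopf solution $u^\nu_\alpha$, and the associated vorticity $\omega^\nu_\alpha$ solves the linear advection-diffusion equation
\begin{equation}
\partial_t \omega^\nu_\alpha + u^\nu_\alpha\cdot\nabla \omega^\nu_\alpha = \nu\Delta\omega^\nu_\alpha,\qquad \omega^\nu_\alpha(\cdot,0)=\omega^\nu_{0,\alpha},
\end{equation}
driven by the smooth divergence-free field $u^\nu_\alpha$. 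Since $\|\omega^\nu_{0,\alpha}\|_{L^1}\le \|\omega^\nu_0\|_{\mathcal{M}}$ uniformly in $\alpha$ by standard properties of mollifiers, one can invoke \eqref{uniform_bounds_1} (with $p=1$) to obtain \eqref{sym1} at the $\alpha$-level, and \cref{equi_integrability}\ref{worsttrip} (which applies since $\omega^\nu_\alpha$ has zero average, being a curl) to obtain \eqref{sym2} at the $\alpha$-level, both uniformly in $\alpha>0$.

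Next I would pass to the limit $\alpha\to 0$ for fixed $\nu>0$. The Leray energy inequality for $u^\nu_\alpha$, together with the uniform bound $\|u^\nu_{0,\alpha}\|_{L^2}\le \|u^\nu_0\|_{L^2}$, gives a uniform control of $u^\nu_\alpha$ in $L^\infty_tL^2_x\cap L^2_tH^1_x$. By Aubin--Lions one extracts a subsequence converging strongly in $L^2_{x,t}$ and weakly in the energy space to some $v^\nu$, which solves Navier--Stokes with initial datum $u^\nu_0$ and, by lower semicontinuity of the $L^2$ norms, still satisfies the energy inequality. Hence $v^\nu$ is a Leray--Hopf solution, and by the well-known two-dimensional uniqueness of such solutions one concludes $v^\nu=u^\nu$.

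Finally, I would transfer the two estimates from $\omega^\nu_\alpha$ to $\omega^\nu$. For \eqref{sym1} it suffices to note that $\omega^\nu_\alpha\rightharpoonup \omega^\nu$ in the sense of distributions (hence in $L^\infty_t\mathcal{M}_x$ in duality with $C^0$) and then use lower semicontinuity of the $L^1$ norm (one can also argue directly after extracting a further subsequence converging for a.e. $t$). For the two quadratic terms in \eqref{sym2} the estimates propagate by weak $L^2$ lower semicontinuity in space-time, applied on the interval $[\delta,T]$ to $\sqrt{\nu}\,\omega^\nu_\alpha$ and $\nu\,\nabla \omega^\nu_\alpha$. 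I do not expect a serious obstacle here: the only mildly delicate point is making sure that the weak convergence of $u^\nu_\alpha\to u^\nu$ in the energy space upgrades to weak convergence of $\omega^\nu_\alpha$ and $\nabla\omega^\nu_\alpha$ on $[\delta,T]$; this however follows from parabolic smoothing for the vorticity equation combined with the uniform bounds \eqref{omega_alpha}, which keep $\omega^\nu_\alpha$ bounded in $L^2([\delta,T];H^1)$ independently of $\alpha$.
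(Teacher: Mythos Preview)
Your proposal is correct and follows essentially the same approach as the paper: mollify the initial datum, apply the advection-diffusion estimates of \cref{equi_integrability} to the smooth vorticity equation (noting, as you do, that the curl has zero mean), and then pass to the limit $\alpha\to 0$ using the energy bounds, strong $L^2$ compactness, and two-dimensional Leray--Hopf uniqueness. If anything, you are slightly more explicit than the paper about the lower-semicontinuity step for \eqref{sym2}, which the paper dispatches in one sentence.
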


In the next proposition, we  derive finer properties of the Leray--Hopf solutions concerning the vanishing viscosity limit.   
\begin{proposition}\label{LH_sol_properties}
Let $\{u^\nu_0\}_{\nu>0}$ satisfy \ref{H1} and \ref{H2}. Then the vorticity $\omega^\nu$ of the Leray--Hopf solution $u^\nu$ satisfies the following.
\begin{enumerate}[label=(\arabic*)]
\item\label{equi_LH}
$\omega^\nu$ admits a decomposition, $
\omega^\nu=f^\nu + \Omega^\nu$
such that $\Omega^\nu\ge 0$ and 
\begin{equation}\label{jaemin loves kimchi}
\sup_{t\ge0}\| f^\nu(t)\|_{L^1}\le \sup_{\nu>0}\| f^\nu_0\|_{L^1}.
\end{equation}
Moreover, $\{f^\nu(t)\}_{\nu>0}$ is equi-integrable, uniformly in time. More precisely, for any $\eps>0$, there exists $\delta>0$ such that
\begin{equation}\label{last equi int}
|A|< \delta\quad \implies \quad \sup_{t\ge 0, \nu\ge 0}\int_{A}|f^{\nu}(x,t)| \,dx< \eps.
\end{equation}

\item\label{no_atom_b} Up to possibly considering a subsequence $\nu_n>0$, we have $|\omega^{\nu_n}|\rightharpoonup^* \mu$ in $L^\infty([0,T];\mathcal M( \T^2))$, for some $\mu(t)\in \mathcal M( \T^2) $ which is non-atomic\footnote{A measure $\mu$ is said to be non-atomic if $\mu(\{x\})=0$ for all $x$.}  for a.e. $t$. Moreover, it holds 
\begin{equation}\label{jaemin please don't leave}
\lim_{R\to 0}\limsup_{\nu_n\to 0}\sup_{x_0\in\mathbb{T}^2}\int_{B_{R}(x_0)}|\omega^{\nu_n}(x,t)|\,dx = 0, \qquad \text{ for a.e. $t\in [0,T]$.}
\end{equation}
\end{enumerate}
\end{proposition}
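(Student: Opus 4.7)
The plan is to split the proposition along its linear structure: propagate the decomposition in \ref{equi_LH} by solving two separate advection--diffusion equations, and derive \ref{no_atom_b} via a Delort--Majda circulation argument that exploits the sign of $\Omega^\nu$.

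For \ref{equi_LH}, I would define $\Omega^\nu$ as the unique solution of
$$
\partial_t\Omega^\nu+u^\nu\cdot\nabla\Omega^\nu=\nu\Delta\Omega^\nu,\qquad \Omega^\nu(\cdot,0)=\Omega_0^\nu,
$$
and set $f^\nu:=\omega^\nu-\Omega^\nu$, so that by linearity of the vorticity equation $f^\nu$ solves the same advection--diffusion equation with initial datum $f_0^\nu$. Non-negativity of $\Omega^\nu$ follows from the parabolic maximum principle; inequality \eqref{jaemin loves kimchi} is then \eqref{uniform_bounds_1} with $p=1$ applied to $f^\nu$, and the equi-integrability \eqref{last equi int} is item~\ref{Ilostairpods} of Proposition~\ref{equi_integrability} applied to $\{f^\nu\}$, whose initial data are weakly $L^1$-compact by \ref{H2}. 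The lack of smoothness of $u^\nu$ up to $t=0$ is handled by first mollifying all initial data (which preserves both the decomposition and the sign of $\Omega_{0,\alpha}^\nu$), applying the estimates to the smooth approximants, and sending the mollification parameter to zero exactly as in the discussion preceding Lemma~\ref{estimate_lem}.

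For \ref{no_atom_b}, the uniform bound \eqref{sym1} combined with a diagonal Banach--Alaoglu extraction yields a subsequence $\nu_n\to 0$ and $\mu\in L^\infty([0,T];\mathcal{M}(\T^2))$ with $|\omega^{\nu_n}|\rightharpoonup^*\mu$. The heart of the matter is \eqref{jaemin please don't leave}. Since $|\omega^\nu|\le|f^\nu|+\Omega^\nu$, the $f^\nu$-piece is controlled uniformly in $\nu$, $t$ and $x_0$ by \eqref{last equi int}. For the $\Omega^\nu$-piece, fix $t$ at which the energy inequality and \ref{H1} give $\|u^\nu(t)\|_{L^2}^2\le C$ uniformly in $\nu$, fix $\e>0$, and use \eqref{last equi int} to select $r_\e>0$ with $\bigl|\int_{B_r(x_0)} f^\nu\,dx\bigr|\le\e$ for all $r\le r_\e$, all $\nu$ and all $x_0\in\T^2$. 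Stokes' theorem gives
$$
\int_{\partial B_r(x_0)} u^\nu(\cdot,t)\cdot\hat\tau\,d\sigma \;=\; \int_{B_r(x_0)}\omega^\nu\,dx \;=\; \int_{B_r(x_0)}\Omega^\nu\,dx + \int_{B_r(x_0)} f^\nu\,dx,
$$
and combining the monotonicity $\int_{B_r}\Omega^\nu\,dx\ge \int_{B_R}\Omega^\nu\,dx$ for $r\ge R$ (from $\Omega^\nu\ge 0$) with the $\e$-bound on the $f^\nu$ term yields, for all $R\le r\le r_\e$,
$$
\int_{\partial B_r(x_0)} u^\nu\cdot\hat\tau\,d\sigma \;\ge\; \int_{B_R(x_0)}\Omega^\nu\,dx-\e.
$$
Cauchy--Schwarz on $\partial B_r$ then gives $\int_{\partial B_r}|u^\nu|^2\,d\sigma\ge \bigl(\int_{B_R}\Omega^\nu\,dx-\e\bigr)_+^2/(2\pi r)$, and integrating $r\in[R,r_\e]$ in polar coordinates produces the logarithmic Delort lower bound
$$
C \;\ge\; \|u^\nu(t)\|_{L^2}^2 \;\ge\; \frac{\bigl(\int_{B_R(x_0)}\Omega^\nu\,dx-\e\bigr)_+^2}{2\pi}\,\log\!\frac{r_\e}{R}.
$$
Solving for the $\Omega^\nu$-mass, taking $\sup_{x_0}$ and $\limsup_\nu$, then letting $R\to 0$ and finally $\e\to 0$, proves \eqref{jaemin please don't leave}. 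Non-atomicity of $\mu(t)$ at a.e.~$t$ follows from this quantitative estimate by the standard open-set lower semicontinuity of positive measures under weak-\(*\) convergence: for any $x_0$, $\mu(\{x_0\})\le \mu(B_R(x_0))\le \liminf_n |\omega^{\nu_n}|(B_R(x_0))$, which is bounded by the $R$-vanishing quantity above.

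The delicate step is the Delort estimate: a sign-changing vorticity alone cannot produce a circulation lower bound in terms of $|\omega^\nu|(B_R)$, since cancellations can destroy the circulation on smaller disks. It is precisely the non-negativity of $\Omega^\nu$ that restores monotonicity on shrinking balls, after which the sign-changing part $f^\nu$ is absorbed as an $\e$-error by equi-integrability; this is the quantitative form of the Delort--Majda insight, and it is what makes assumption \ref{H2} structurally unavoidable.
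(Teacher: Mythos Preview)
Your proof of \ref{equi_LH} is essentially identical to the paper's: define $f^\nu$ and $\Omega^\nu$ as solutions of the linear advection--diffusion equations with the same velocity $u^\nu$, invoke the maximum principle and Proposition~\ref{equi_integrability}, and absorb the $t=0$ regularity issue by mollification as in Lemma~\ref{estimate_lem}.

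For \ref{no_atom_b} you take a genuinely different route. The paper does not redo the circulation argument: instead it uses the algebraic bound
\[
|\omega^{\nu_n}|\le |f^{\nu_n}|+\Omega^{\nu_n}=|f^{\nu_n}|+(\omega^{\nu_n}-f^{\nu_n})\le 2|f^{\nu_n}|+\omega^{\nu_n},
\]
passes to the weak* limit to obtain $\mu\le 2f_+ + \omega$ in $L^\infty_t\mathcal{M}_x$, and observes that neither $f_+(t)\in L^1$ nor $\omega(t)\in \mathcal{M}\cap H^{-1}$ can have atoms (the latter via Lemma~\ref{concentration_lem2}). The concentration estimate \eqref{jaemin please don't leave} is then read off from the same inequality, using the time-slice convergence $\omega^{\nu_n}(t)\rightharpoonup^*\omega(t)$ of Remark~\ref{R:weak conv all times}. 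Your approach instead reproves the $H^{-1}\cap\mathcal{M}$ non-atomicity \emph{in situ} via the explicit Stokes/logarithmic estimate of Majda; this is more self-contained and yields a quantitative bound uniform in $\nu$ \emph{before} any limit is taken, at the price of redoing a computation that the paper packages into Lemma~\ref{concentration_lem2}. Both arguments are correct in substance and ultimately encode the same Delort mechanism.

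One minor gap in your writeup: the last step, where you deduce that $\mu(t)$ is non-atomic from ``open-set lower semicontinuity'' via $\mu_t(B_R(x_0))\le\liminf_n|\omega^{\nu_n}(t)|(B_R(x_0))$, implicitly assumes $|\omega^{\nu_n}(t)|\rightharpoonup^*\mu(t)$ in $\mathcal{M}(\T^2)$ at fixed $t$. The weak* convergence you extracted is only in $L^\infty([0,T];\mathcal{M}(\T^2))$, which does not immediately give time-slice convergence of $|\omega^{\nu_n}|$ (unlike $\omega^{\nu_n}$ itself, for which Remark~\ref{R:weak conv all times} applies). This is easily repaired: your bound $\sup_{\nu,x_0}\int_{B_R(x_0)}|\omega^\nu(\cdot,t)|\,dx\to 0$ is uniform in $\nu$, so testing the space--time weak* convergence against $g(t)\varphi(x)$ with $\varphi$ a bump on $B_{2R}(x_0)$ and varying $g\in L^1([0,T])$ over a countable family of balls gives $\mu_t(B_R(x_0))\le\sup_\nu\int_{B_{2R}(x_0)}|\omega^\nu(\cdot,t)|\,dx$ for a.e.\ $t$, whence non-atomicity.
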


\begin{proof}
We prove the two claims separately. 

\textsc{Proof of \ref{equi_LH}.} As it has already been done to prove \cref{estimate_lem}, to rigorously justify the next computations one should regularize the initial data, say $u^\nu_{0,\alpha}=u^\nu_0*\rho_\alpha$, and prove the corresponding claims for $\omega_\alpha^\nu=f^\nu_\alpha+\Omega^\nu_\alpha$. Since the latter will be uniform in $\alpha$, then the proof of \ref{equi_LH} will follow by letting $\alpha\rightarrow 0$. However, since this would unnecessarily burden the notation, we will skip the regularization step and only prove the claimed a priori estimates.

As usual, denote by $\{u^\nu\}_\nu$ the sequence of Leray--Hopf solutions with initial data $\{u^\nu_0\}_\nu$ and let $\omega_0^\nu=f_0^\nu+ \Omega^\nu_0$ be the splitting from the assumption \ref{H2}. We let $f^\nu$ and $\Omega^\nu$ to be the solutions to 
\begin{align}
\begin{cases}
\partial_tf^\nu + u^\nu\cdot\nabla f^\nu = \nu \Delta f^\nu\\
f^\nu (\cdot,0)=f^\nu_0,
\end{cases}
\qquad \text{ and } \qquad
\begin{cases}
\partial_t\Omega^\nu + u^\nu\cdot\nabla \Omega^\nu = \nu \Delta \Omega^\nu\\
\Omega^\nu(\cdot ,0)=\Omega^\nu_0.
\end{cases}
\end{align}
Since $\Omega_0^\nu\geq 0$, the maximum principle tells us $\Omega^\nu(t)\geq 0$ for all $t\ge0$. By uniqueness in the advection-diffusion equation with smooth velocity, it holds that  $\omega^\nu=f^\nu + \Omega^\nu$.
Moreover, the estimate \eqref{jaemin loves kimchi} is a direct consequence of \eqref{uniform_bounds_1} while the equi-integrability of $\{f^\nu\}_\nu$ comes from \ref{Ilostairpods} in \cref{equi_integrability} since we are assuming $\left\{f_0^\nu\right\}_{\nu}$ to be weakly compact in $L^1(\T^2)$.

\textsc{Proof of \ref{no_atom_b}.} By the uniform in viscosity bounds \eqref{sym1} and \eqref{jaemin loves kimchi} we can extract a subsequence $\nu_n\rightarrow 0$ such that 
$$
\omega^{\nu_n}\rightharpoonup^* \omega \qquad \text{ and } \qquad |f^{\nu_n}|\rightharpoonup^* f_+ \qquad \text{ in } L^\infty([0,T];\mathcal{M}(\T^2)).
$$
Moreover, the equi-integrability of $\{f^{\nu_n}\}_{n}$ implies that $f_+\in L^\infty([0,T];L^1(\T^2))$. Since $\omega^{\nu_n}=\curl u^{\nu_n}$ with $\{u^{\nu_n}\}_n$ bounded in $L^\infty([0,T];L^2(\T^2))$, it must hold $\omega \in L^\infty([0,T];H^{-1}(\T^2))$. In particular both $f_+(t)\in L^1(\T^2)$ and $\omega(t)\in \mathcal{M}(\T^2)\cap H^{-1}(\T^2)$ (see \cref{concentration_lem2} below) do not have atoms for a.e. $t\in [0,T]$. Thus, if $|\omega^{\nu_n}|\rightharpoonup^* \mu$ in $L^\infty([0,T];\mathcal{M}(\T^2))$, by 
\begin{equation}\label{smart bound}
|\omega^{\nu_n}|\leq |f^{\nu_n}| + \Omega^{\nu_n}\leq 2 |f^{\nu_n}| + \omega^{\nu_n},
\end{equation}
 we have $\mu\leq  \omega + 2f_+$ as elements in $L^\infty([0,T];\mathcal{M}(\T^2))$, which in turn implies that $\mu(t)\in \mathcal{M}(\T^2)$ does not have atoms for a.e. $t\in [0,T]$.

We are left to prove \eqref{jaemin please don't leave}. We start by noting that, since $u^{\nu_n}(t)\rightharpoonup u(t)$ in $L^2(\T^2)$ for a.e. $t\in [0,T]$ (see \cref{R:weak conv all times}), then $\omega^{\nu_n} (t)\rightharpoonup^*\omega(t)$ in $\mathcal{M}(\T^2)$ for a.e. $t\in [0,T]$. Thus the set 
$$
I:=\left\{ t\in [0,T] \,:\, \omega^{\nu_n} (t)\rightharpoonup^*\omega(t) \text{ in } \mathcal{M}(\T^2) ,\, \omega(t) \text{ does not have atoms} \right\}
$$
has full measure in $[0,T]$. Pick any $t\in I$, which from now on will be fixed. By the compactness of $\T^2$ we can find, possibly passing to a further subsequence\footnote{For the sake of clarity, let us specify that here it seems that the subsequence might depend on $t\in I$. However, the argument we provided proves that, starting from any subsequence $\nu_{n_k}$, we can find a further subsequence $\nu_{n_{k_j}}$ which enjoys \eqref{ugly bound}. This is enough to deduce that the limsup of the full sequence $\nu_n$ satisfies \eqref{ugly bound}.}, $x_{0,n}=x_{0,n}(R)$ such that $x_{0,n}\rightarrow z_0=z_0(R)\in \T^2$ and 
$$
\sup_{x_0\in \T^2} \int_{B_R(x_0)} |\omega^{\nu_n}(x,t)|\,dx=\int_{B_R(x_{0,n})}|\omega^{\nu_n}(x,t)|\,dx.
$$
If $n$ is chosen sufficiently large we can ensure that $B_R(x_{0,n})\subset B_{2R}(z_0)$.  Pick $\varphi_R$ such that 
$$
0\leq \varphi_R\leq 1, \quad \varphi_R \in C^0_c(B_{4R}(z_0)) \quad \text{and} \quad \varphi_R\big|_{B_{2R}(z_0)}\equiv 1.
$$
By \eqref{smart bound} we deduce
\begin{align}
    \limsup_{n\rightarrow \infty}\sup_{x_0\in \T^2} \int_{B_R(x_0)} |\omega^{\nu_n}(x,t)|\,dx&\leq  \limsup_{n\rightarrow \infty}\int_{B_{2R}(z_0)} |\omega^{\nu_n}(x,t)|\,dx\nonumber \\
    &\leq  \limsup_{n\rightarrow \infty}\int_{B_{4R}(z_0)} \varphi_R(x) |\omega^{\nu_n}(x,t)|\,dx\nonumber \\
    &\leq 2 \sup_n \int_{B_{4R}(z_0)} |f^{\nu_n}(x,t)|\,dx + \lim_{n\rightarrow \infty}\int_{B_{4R}(z_0)} \varphi_R(x)\omega^{\nu_n}(x,t)\,dx\nonumber \\
    &= 2 \sup_n \int_{B_{4R}(z_0)} |f^{\nu_n}(x,t)|\,dx +\int_{B_{4R}(z_0)} \varphi_R(x)\omega(x,t)\,dx
    \nonumber \\
    &\leq 2 \sup_n\sup_{x_0\in \T^2} \int_{B_{4R}(x_0)} |f^{\nu_n}(x,t)|\,dx + \sup_{x_0\in \T^2}|\omega_t |\left( B_{4R}(x_0)\right)\label{ugly bound},
\end{align}
where in the second last equality we have used that $t\in I$. Thanks to the (uniform in time) equi-integrability of $\{f^{\nu_n}(t)\}_n$ from \eqref{last equi int}, together with the fact that $\omega_t$ does not have atoms, \eqref{jaemin please don't leave} directly follows by letting $R\rightarrow 0$.
\end{proof}

\begin{lemma}\cite{delort1991existence}*{Lemma 1.2.5}\label{concentration_lem2}
Any  $\omega \in H^{-1}(\mathbb{T}^2) \cap \mathcal{M}(\mathbb{T}^2)$ does not have atoms, that is $\omega(\{x_0\})=\lim_{R\to 0}\omega(B_R(x_0))=0${, for any $x_0\in \mathbb{T}^2$}.
\end{lemma}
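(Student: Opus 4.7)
The plan is to construct a family of continuous Lipschitz cutoff functions $\psi_\varepsilon$ on $\T^2$ concentrating at $x_0$ whose $H^1$ norm vanishes as $\varepsilon\to 0$. Testing $\omega$ against $\psi_\varepsilon$ in two complementary ways---as a Radon measure paired with a continuous function, and as an $H^{-1}$ distribution paired with an $H^1$ function---will pinch $\omega(\{x_0\})$ between a value it should read off (by dominated convergence) and a quantity that is forced to zero (by duality).

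Without loss of generality $x_0 = 0$. For $\varepsilon\in (0,1/4)$ I would take the two-scale logarithmic cutoff
\[
\psi_\varepsilon(x):=\begin{cases} 1 & |x|\le \varepsilon, \\[2pt] \dfrac{2\log(\sqrt{\varepsilon}/|x|)}{|\log \varepsilon|} & \varepsilon \le |x| \le \sqrt{\varepsilon}, \\[2pt] 0 & |x|\ge \sqrt{\varepsilon}, \end{cases}
\]
which is Lipschitz, supported in $B_{\sqrt{\varepsilon}}(0)$, and takes values in $[0,1]$. A direct polar-coordinate computation yields
\[
\|\psi_\varepsilon\|_{L^2(\T^2)}^2 \le \pi \varepsilon, \qquad \|\nabla \psi_\varepsilon\|_{L^2(\T^2)}^2 = \frac{8\pi}{|\log \varepsilon|^2}\int_\varepsilon^{\sqrt{\varepsilon}}\frac{dr}{r} = \frac{4\pi}{|\log \varepsilon|},
\]
so $\|\psi_\varepsilon\|_{H^1(\T^2)}\to 0$ as $\varepsilon\to 0$. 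This logarithmic profile is forced by the critical two-dimensional scaling: $\int |x|^{-2}\, dx$ diverges only logarithmically, and the prefactor $|\log \varepsilon|^{-1}$ is exactly what makes the gradient square-integrable with vanishing norm---this is the very essence of Delort's observation.

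With the cutoffs in hand, two estimates close the argument. On the one hand, since $0\le \psi_\varepsilon\le 1$, $\psi_\varepsilon(0)=1$ and $\psi_\varepsilon(x)\to 0$ for each $x\neq 0$, the finiteness of $|\omega|$ and dominated convergence give
\[
\int_{\T^2}\psi_\varepsilon\, d\omega \xrightarrow[\varepsilon\to 0]{} \omega(\{0\}).
\]
On the other hand, since $\psi_\varepsilon\in C(\T^2)\cap H^1(\T^2)$, the same integral coincides with the $H^{-1}$-$H^1$ duality pairing $\langle \omega, \psi_\varepsilon\rangle$, so
\[
\left|\int_{\T^2}\psi_\varepsilon\, d\omega\right| \le \|\omega\|_{H^{-1}}\|\psi_\varepsilon\|_{H^1}\xrightarrow[\varepsilon\to 0]{} 0.
\]
Comparing the two limits yields $\omega(\{0\})=0$, and since $x_0$ was arbitrary, $\omega$ has no atoms.

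The only real subtlety---and the main obstacle---is the identification of the Radon-measure pairing $\int \psi_\varepsilon\, d\omega$ with the distributional pairing $\langle \omega, \psi_\varepsilon\rangle_{H^{-1},H^1}$, since $\psi_\varepsilon$ is merely Lipschitz and not smooth. I would handle this by a standard density argument: mollify $\psi_\varepsilon$ as $\psi_\varepsilon\ast \rho_\alpha$, which is smooth and converges to $\psi_\varepsilon$ both uniformly on $\T^2$ and strongly in $H^1(\T^2)$; the two pairings trivially agree on smooth functions, and each is continuous under the corresponding convergence, so letting $\alpha\to 0$ identifies them. Everything else is bookkeeping.
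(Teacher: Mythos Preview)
Your proof is correct and is precisely the classical logarithmic-cutoff argument of Delort. The paper does not supply its own proof of this lemma---it merely cites \cite{delort1991existence}*{Lemma 1.2.5}---so there is nothing to compare against beyond noting that your argument reproduces Delort's original one essentially verbatim.
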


\subsection{Global weak solutions to Euler}

The goal of this section is to prove the following statement.
\begin{theorem}\label{T:Delort--Majda} Let $\{u^\nu_0\}_{\nu>0}$ satisfy \ref{H1} and \ref{H2} and $\{u^\nu\}_{\nu>0}$ be the corresponding sequence of Leray--Hopf solutions. Then, there exist $u_0\in L^2(\T^2)$, a subsequence $\nu_n>0$ and $u\in L^\infty([0,T];L^2(\mathbb{T}^2))$ such that $u^{\nu_{n}}\rightharpoonup^* u$ in $L^\infty([0,T];L^2(\mathbb{T}^2))$ and $u$ is an admissible weak solution to the Euler equation \eqref{E} with initial data $u_0$ in the sense of \cref{D: E admiss sol}. Moreover, for $\omega:=\curl u$, it holds
\begin{equation}\label{vort bounded in time}
    \|\omega(t)\|_{\mathcal M }\leq \sup_{\nu>0} \|\omega^\nu_0\|_{\mathcal M }\qquad \text{ for a.e. } t\in [0,T].
\end{equation}
\end{theorem}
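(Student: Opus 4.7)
My plan is to combine the a priori bounds from \cref{estimate_lem} with the non-atomicity from \cref{LH_sol_properties} via a Delort-type symmetrization argument to pass to the limit in the nonlinear term. The energy inequality \eqref{Leray en ineq} together with \ref{H1} makes $\{u^\nu\}$ uniformly bounded in $L^\infty([0,T];L^2(\T^2))$, so Banach-Alaoglu produces a subsequence $u^{\nu_n}\rightharpoonup^* u$. By \ref{H1} a further subsequence satisfies $u_0^{\nu_n}\to u_0$ strongly in $L^2(\T^2)$. Testing \eqref{Leray en ineq} against a non-negative $\psi\in C_c^\infty((0,T))$ and taking the $\liminf$ using weak lower semicontinuity yields the admissibility $E_u(t)\leq E_{u_0}$ for a.e.\ $t$ in the sense of \eqref{amiss cond}; the analogous technique applied to \eqref{sym1} and the total-variation norm gives the vorticity bound \eqref{vort bounded in time}.

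In the Leray--Hopf weak formulation \eqref{weak_formulation_NS} the linear and initial-datum terms pass to the limit by the above convergences, the viscous term is $O(\nu_n)$, and the only nontrivial contribution is the quadratic one $\int_0^T\int_{\T^2} u^{\nu_n}\otimes u^{\nu_n}:\nabla\varphi\,dxdt$, for which weak $L^2$ convergence is insufficient. Following Delort, I would use the periodic Biot--Savart law $u^{\nu_n}=K\ast\omega^{\nu_n}$ and symmetrize in the two copies of $\omega^{\nu_n}$ to obtain, for any divergence-free $\varphi$,
\begin{equation*}
\int_{\T^2} u^{\nu_n}\otimes u^{\nu_n}:\nabla \varphi\,dx = \iint_{\T^2\times\T^2} H_\varphi(x,y,t)\,\omega^{\nu_n}(x,t)\,\omega^{\nu_n}(y,t)\,dxdy,
\end{equation*}
where $H_\varphi$ is bounded on $\T^2\times \T^2$ and continuous outside the diagonal $\{x=y\}$. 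Splitting this double integral into $\{|x-y|\ge R\}$ and $\{|x-y|<R\}$, the off-diagonal piece passes to the limit by weak-$*$ convergence of $\omega^{\nu_n}\otimes\omega^{\nu_n}$ on the closed set (with continuous integrand). The near-diagonal piece is controlled by
\begin{equation*}
\left|\iint_{|x-y|<R} H_\varphi\,\omega^{\nu_n}(x)\omega^{\nu_n}(y)\,dxdy\right|\leq \|H_\varphi\|_\infty\,\|\omega^{\nu_n}(t)\|_{L^1}\sup_{x_0\in\T^2}\int_{B_R(x_0)}|\omega^{\nu_n}(y,t)|\,dy,
\end{equation*}
and combining \eqref{sym1} with the no-concentration property \eqref{jaemin please don't leave} from \cref{LH_sol_properties}\ref{no_atom_b} makes the right-hand side vanish as $R\to 0$ uniformly in $n$, a.e.\ in time. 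The analogous diagonal term in the limit also vanishes with $R$ since $\omega(t)\in\mathcal{M}\cap H^{-1}(\T^2)$ has no atoms by \cref{concentration_lem2}. Dominated convergence in $t$ then concludes that $u$ satisfies \eqref{E} with initial datum $u_0$ in the sense of \cref{D:E weak sol}.

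The main technical obstacle is constructing the symmetrized kernel $H_\varphi$ and verifying its uniform boundedness across the torus diagonal: the periodic Biot--Savart kernel $K$ differs from its $\R^2$ counterpart by a smooth corrector but retains the same logarithmic singularity, and the Delort cancellation in the quadratic expression $K(\cdot-x)\otimes K(\cdot-y):\nabla\varphi(\cdot)$ integrated in the first variable is precisely what upgrades this logarithmic singularity to a bounded kernel after symmetrization. Once the structure of $H_\varphi$ is in hand, the non-atomicity output of \cref{LH_sol_properties} is exactly the input needed to close the concentration-cancellation step.
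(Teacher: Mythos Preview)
Your proposal is correct and follows essentially the same route as the paper: the paper outsources the concentration-cancellation step to the black-box \cref{concentration_lem} (cited from Chemin), whereas you unpack that lemma via the Delort symmetrized kernel $H_\varphi$. Two small caveats worth tightening: the near-diagonal control from \eqref{jaemin please don't leave} gives only $\lim_{R\to 0}\limsup_{n}=0$, not uniformity in $n$ (this still suffices via the usual $\eps/3$ argument, but your phrasing ``uniformly in $n$'' is not quite what is available); and passing to the limit in the off-diagonal tensor $\omega^{\nu_n}(x,t)\omega^{\nu_n}(y,t)$ requires a.e.-in-time weak convergence $\omega^{\nu_n}(t)\rightharpoonup^*\omega(t)$, which does not follow from weak-$*$ convergence in $L^\infty_t\mathcal{M}_x$ alone---the paper secures it via the time-Lipschitz bound in $H^{-N}$ recorded in \cref{R:weak conv all times}.
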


The main argument of the proof aligns with the construction of weak solutions to the Euler equation in \cites{delort1991existence,vecchi19931}, with the key element of the proof lying in demonstrating that the sequence of approximate solutions does not generate a vorticity concentration in the limiting process. Although this argument has become standard, let us briefly describe it for the reader's convenience. Let us also emphasise that strictly speaking  Theorem~\ref{T:Delort--Majda} is not available in the literature, since the original proof by Delort \cite{delort1991existence} does not deal with the viscous  approximation while the Majda's one \cite{majda1993remarks} heavily relies on the positivity of the full vorticity, which is not consistent with being in the periodic setting.

By the energy inequality \eqref{Leray en ineq}, together with the $L^2$ compactness\footnote{Note that here (and thus in the assumptions of \cref{T:Delort--Majda}) and in Proposition~\ref{LH_sol_properties} the strong $L^2$ compactness of the initial data is not really necessary, and the  $L^2$ boundedness is enough. } of the initial velocities in \ref{H1}, it immediately follows that the sequence $\left\{ u^{\nu}\right\}_{\nu}$ enjoys a uniform bound
\[
\sup_{t>0,\,\nu>0}\| u^\nu(t)\|_{L^2} \leq \sup_{\nu>0}\| u^\nu_0\|_{L^2}<\infty.
\]
Thus, we infer that there exist $u_0\in L^2(\T^2)$ and  a subsequence $\nu_n$ and $u\in L^\infty([0,T]; L^2(\mathbb{T}^2)) $ such that $\nu_n\to 0$ and 
\begin{align}\label{convergence_NS_E}
u^{\nu_n} \rightharpoonup^* u \,  \text{ \,in }L^\infty([0,T]; L^2(\mathbb{T}^2)) \qquad \text{ and } \qquad u^{\nu_n}_0\to u_0\text{\, in $L^2(\mathbb{T}^2)$}.
\end{align}
\begin{remark}\label{R:weak conv all times}
A useful remark is that the convergence of $u^{\nu_n} \rightharpoonup^* u$ in $L^\infty([0,T]; L^2(\mathbb{T}^2))$ can be upgraded to $u^{\nu_n}(t)\rightharpoonup u(t)$ in $L^2(\T^2)$ for a.e. $t\in [0,T]$. Indeed, in view of the uniform bound of $\{u^{\nu_n}\}_n$ in $L^\infty([0,T]; L^2(\mathbb{T}^2))$, the Navier--Stokes equations \eqref{NS} automatically imply that $\{u^{\nu_n}\}_n$ stays bounded in $\Lip ([0,T]; H^{-N}(\mathbb{T}^2))$ for a sufficiently large $N\in \N$, from which the claim follows by Ascoli-Arzelà and the density of $C^\infty(\T^2)$ in $L^2(\T^2)$.
\end{remark}
The only remaining (and crucial!) step is to prove that $u$ is a weak solution to the Euler equation. Once this is accomplished, the fact that the solution is also admissible is an obvious consequence of \eqref{Leray en ineq} and  the lower semicontinuity of the $L^\infty([0,T];L^2(\T^2))$ norm, while \eqref{vort bounded in time} is a direct consequence of \eqref{sym1}.

Since $u^{\nu_n}$ is a strong solution to the Navier--Stokes system, it must satisfy the weak formulation \eqref{weak_formulation_NS}, that is, for any $\varphi\in C^2_c(\mathbb{T}^2\times [0,T))$ with $\div \varphi=0$, it holds
\begin{align}
\int_0^T \int_{\T^2} \left(u^{\nu_n} \cdot \partial_t\varphi + \nu_n u^{\nu_n} \cdot \Delta \varphi\right) \,dxdt +\int_{\T^2} u^{\nu_n}_0 \cdot \varphi (x,0)\,dx = - \int_0^T\int_{\mathbb{T}^2} u^{\nu_n}\otimes u^{\nu_n}  :\nabla \varphi  \,dxdt.
\end{align}
Taking $\nu_n\to 0$ and using the convergence properties in \eqref{convergence_NS_E}, the left-hand side converges to
\[
\int_0^T \int_{\T^2} u \cdot \partial_t\varphi  \,dxdt +\int_{\T^2} u_0 \cdot \varphi (x,0)\,dx.
\]
Hence in view of the weak formulation for the Euler equation, the proof of the theorem will be completed once we show, up to a further subsequence if necessary, that
\begin{align}\label{quadratic_convergence}
\lim_{\nu_n\to 0}\int_0^T\int_{\mathbb{T}^2} u^{\nu_n}\otimes u^{\nu_n}  :\nabla \varphi  \,dxdt = \int_0^T\int_{\mathbb{T}^2} u\otimes u  :\nabla \varphi  \,dxdt,
\end{align} 
for any $\varphi\in C^2_c(\mathbb{T}^2\times [0,T))$ with $\div \varphi=0$. 

As pointed out in \cite{delort1991existence},  a sufficient condition for \eqref{quadratic_convergence} to hold can be formulated in terms of the vorticity. Here below we state the  key lemma from the literature which leads to the needed concentration-compactness result.
\begin{lemma}\cite{scho95}*{Lemma 3.7}\label{concentration_lem}
Let $\omega^{\nu_n}:=\curl u^{\nu_n}$ with $\{u^{\nu_n}\}_n$ bounded in $L^\infty([0,T];L^2(\T^2))$. If $\{|\omega^{\nu_n}|\}_n$ is bounded in $L^\infty([0,T];\mathcal{M}(\mathbb{T}^2))$ and  \eqref{jaemin please don't leave} holds,  then \eqref{quadratic_convergence} holds.
\end{lemma}
In particular, \cref{concentration_lem} together with \ref{no_atom_b} from \cref{LH_sol_properties} concludes the proof of \cref{T:Delort--Majda}, which in turn gives \ref{T1} in \cref{T:main}.

\begin{remark}
The meticulous reader may notice that \cite{scho95}*{Lemma 3.7} assumes the slightly different condition 
$$
\lim_{N\rightarrow \infty}\lim_{R\to 0}\sup_{n\geq N}\sup_{x_0\in\mathbb{T}^2}\int_{B_{R}(x_0)}|\omega^{\nu_n}(x,t)|\,dx = 0, \qquad \text{ for a.e. $t\in [0,T]$.}
$$
However,  the very same proof goes through by assuming \eqref{jaemin please don't leave}.
\end{remark}

Strictly speaking, while slightly different statements of the above lemma can be found in several references (e.g. \cites{delort1991existence,vecchi19931,majda1993remarks,evans1994hardy}), they are all stated in the full space $\R^2$ but none of them deals with the case of the periodic box $\T^2$.  However, the proof of \cref{concentration_lem} can be easily localized on any bounded domain (e.g. \cite{delort1991existence}*{Section 2.3} and \cite{CLLV19}) from which the statement on the periodic box directly follows. Details are left to the reader, which may also consult \cite{Chem95}*{Chapter 6} and \cite{majda2002vorticity}*{Chapter 11}.

\section{No anomalous dissipation}\label{NAD}
In this section we prove that measure initial vorticities with positive singular part prohibit anomalous dissipation in the vanishing viscosity limit, in the two-dimensional setting. More precisely we prove the following result, which gives \ref{T2} and will conclude the proof of \cref{T:main}.

\begin{proposition}\label{P:no anom diss}
Let $\{u^\nu_0\}_{\nu>0}$ be a sequence of divergence-free vector fields satisfying \ref{H1} and \ref{H2}.   Let $\{u^\nu\}_{\nu>0}$ be the corresponding sequence of Leray--Hopf solutions to \eqref{NS} with $\nu\to 0$. Then 
\begin{equation}
    \limsup_{\nu\rightarrow 0} \nu\int_0^T\int_{\T^2} |\nabla u^\nu (x,t)|^2\,dxdt=0.
\end{equation}
\end{proposition}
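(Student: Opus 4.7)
The plan is to split the dissipation integral as $\nu\int_0^T = \nu\int_0^\delta + \nu\int_\delta^T$ for a small parameter $\delta>0$, control each piece, and then send $\delta\to 0$.

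For the short-time piece $[0,\delta]$, I would use Leray's energy inequality to write
\[
\nu \int_0^\delta \|\nabla u^\nu(t)\|_{L^2}^2 \, dt \leq E_{u_0^\nu} - E_{u^\nu}(\delta).
\]
Along the subsequence $\{u^{\nu_n}\}_n$ from \cref{T:Delort--Majda}, hypothesis \ref{H1} gives $E_{u_0^{\nu_n}}\to E_{u_0}$, while the weak convergence $u^{\nu_n}(\delta)\rightharpoonup u(\delta)$ in $L^2$ valid for a.e.~$\delta$ (cf.~\cref{R:weak conv all times}) and lower semicontinuity of the $L^2$ norm give $\liminf_n E_{u^{\nu_n}}(\delta) \geq E_u(\delta)$. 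Therefore $\limsup_n \nu_n \int_0^\delta \|\nabla u^{\nu_n}\|_{L^2}^2 \, dt \leq E_{u_0} - E_u(\delta)$, and this tends to $0$ as $\delta\to 0$ by the right-continuity of the Euler kinetic energy at $t=0$ proved in \cref{L:energy right cont}.

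The main step is the $[\delta,T]$ contribution, where I would exploit the non-concentration \eqref{jaemin please don't leave} through a \emph{localized} Gagliardo--Nirenberg--Poincar\'e estimate at the Kolmogorov scale $r=\sqrt\nu$. Covering $\T^2$ by a finitely overlapping family of disks $\{B_r(x_i)\}_{i\in I}$ with $|I|\sim r^{-2}$, a standard rescaling of the $L^2$--$L^1$--$\dot H^1$ interpolation on the unit disk (applied after subtracting the mean to handle non-zero-average data) yields on each disk
\[
\|\omega^\nu\|_{L^2(B_r(x_i))}^2 \leq C\, \|\omega^\nu\|_{L^1(B_r(x_i))}\,\|\nabla \omega^\nu\|_{L^2(B_r(x_i))} + C\, r^{-2}\,\|\omega^\nu\|_{L^1(B_r(x_i))}^2.
\]
Summing in $i$, introducing $M_r(t):=\sup_{x_0\in \T^2}\int_{B_r(x_0)}|\omega^\nu(x,t)|\,dx$, applying Cauchy--Schwarz together with the finite-overlap property and the uniform bound $\|\omega^\nu(t)\|_{L^1}\leq M:=\sup_\nu\|\omega^\nu_0\|_{\mathcal M}$, and finally setting $r=\sqrt\nu$ (the only scale that balances the two terms against \eqref{sym2}), I obtain the pointwise-in-time estimate
\[
\nu\,\|\omega^\nu(t)\|_{L^2}^2 \leq C_M\,\bigl( M_{\sqrt\nu}(t)^{1/2}\,\nu\|\nabla\omega^\nu(t)\|_{L^2} + M_{\sqrt\nu}(t)\bigr).
\]
Integrating over $[\delta,T]$ and applying Cauchy--Schwarz in time together with the enstrophy bound $\nu^2\int_\delta^T\|\nabla\omega^\nu\|_{L^2}^2\,dt\leq C_{\delta,T}$ from \eqref{sym2}, the problem reduces to proving that $\int_\delta^T M_{\sqrt{\nu_n}}(t)\,dt \to 0$. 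This follows from dominated convergence: by \ref{no_atom_b} of \cref{LH_sol_properties} and the monotonicity of $R\mapsto M_R$, one has $M_{\sqrt{\nu_n}}(t)\to 0$ for a.e.~$t$, while $M_{\sqrt{\nu_n}}(t)\leq M$ uniformly. Together with the identity $\|\nabla u^\nu\|_{L^2}=\|\omega^\nu\|_{L^2}$ on $\T^2$ (integration by parts using $\div u^\nu=0$), this yields vanishing dissipation on $[\delta,T]$ along the subsequence.

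Combining the two estimates and letting $\delta\to 0$ concludes the proof along the subsequence; the full sequence then follows by a standard subsequence-of-subsequence argument, since from any subsequence of $\nu\to 0$ one can extract a further subsequence enjoying \cref{T:Delort--Majda} and \ref{no_atom_b} of \cref{LH_sol_properties}, to which the preceding reasoning applies. I expect the main obstacle to be designing the localization at the correct scale: a coarser $r$ fails to harvest the qualitative non-concentration \eqref{jaemin please don't leave}, whereas a finer $r$ destroys the balance with the enstrophy control \eqref{sym2}, and only the Kolmogorov choice $r=\sqrt\nu$ makes both contributions vanish simultaneously.
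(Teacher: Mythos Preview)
Your proposal is correct and follows the same overall architecture as the paper: split at a small time $\delta$, handle $[0,\delta]$ via the energy identity, \ref{H1}, weak lower semicontinuity, and the right-continuity of $E_u$ at $t=0$ (\cref{L:energy right cont}), and handle $[\delta,T]$ by combining the non-concentration \eqref{jaemin please don't leave} with the enstrophy bound \eqref{sym2}. The $[0,\delta]$ step and the closing subsequence-of-subsequence argument are essentially identical to the paper.

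The only genuine difference is the mechanism on $[\delta,T]$. The paper splits $\omega^{\nu_n}=\omega^{\nu_n}*\rho_{\eta\sqrt{\nu_n}}+\bigl(\omega^{\nu_n}-\omega^{\nu_n}*\rho_{\eta\sqrt{\nu_n}}\bigr)$, controls the mollified piece via $\|\omega_1\|_{L^2}^2\le \|\omega_1\|_{L^1}\|\omega_1\|_{L^\infty}$ with $\|\omega_1\|_{L^\infty}\le (\eta^2\nu_n)^{-1}M_{\eta\sqrt{\nu_n}}$, and the remainder via the mollification error $\|\omega_2\|_{L^2}^2\le C\eta^2\nu_n\|\nabla\omega\|_{L^2}^2$; the auxiliary parameter $\eta$ is then sent to $0$ to kill the residual constant. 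Your covering-plus-local-Gagliardo--Nirenberg argument is a legitimate alternative that reaches the same endpoint: it localizes directly at scale $\sqrt{\nu}$, the scaling check you give for the local inequality is correct, and after summing and Cauchy--Schwarz both contributions carry a factor of $\int_\delta^T M_{\sqrt{\nu_n}}(t)\,dt$, which vanishes by dominated convergence exactly as you say. A minor advantage of your version is that no extra $\eta$ is needed, since the ``mean'' term already comes with an $M_{\sqrt{\nu}}$ factor; the paper's mollification route, on the other hand, avoids the covering bookkeeping and makes the balancing of scales entirely explicit. Both approaches rest on the same two inputs, \eqref{jaemin please don't leave} and \eqref{sym2}, and both single out the Kolmogorov scale $\sqrt{\nu}$ as the unique choice compatible with those bounds.
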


\begin{proof} We argue by contradiction. Let us suppose to the contrary that there exists a subsequence $\nu_n$  such that $\nu_n\to 0$ as $n\to \infty$ and
\begin{align}\label{contrary}
\lim_{n\to \infty}\nu_n\int_0^T\int_{\T^2} \|\nabla u^{\nu_n} (t)\|^2_{L^2}\,dt=: \mathfrak{c}>0.
\end{align}
Let us decompose the dissipation as 
\begin{align}\label{Luigi_can_you_find_this}
\nu_n \int_0^{T} \| \nabla u^{\nu_n}(t)\|_{L^2}^2 \,dt = \nu_n \int_0^{\delta} \| \nabla u^{\nu_n}(t)\|_{L^2}^2 \,dt +  \nu_n \int_\delta^{T} \| \nabla u^{\nu_n}(t)\|_{L^2}^2 \,dt
\end{align}
for some $\delta>0$ which will be fixed later.

\textsc{Step 1:} No dissipation in $(0,\delta)$.

Here we deal with the first integral in \eqref{Luigi_can_you_find_this}.   By \cref{T:Delort--Majda} and \ref{H1} we can find a non-relabelled subsequence of $\left\{\nu_n\right\}_{n}$ such that  
\begin{align}\label{convergence_1}
u^{\nu_n}_0\to u_0 \text{ \, in $L^2(\mathbb{T}^2)$} \qquad \text{ and } \qquad  u^{\nu_n}\rightharpoonup^* u \text{ \, in }L^\infty([0,T];L^2(\T^2)),
\end{align} for some $u_0\in L^2(\mathbb{T}^2)$ and $u\in L^\infty([0,T];L^2(\T^2))$ which is an admissible weak solution to \eqref{E} with initial data $u_0$. In particular, see \cref{R:admiss are cont}, we can fix the precise representative $u\in C^0([0,T];L^2_{\rm w} (\T^2))$. 

To proceed, let $\eps$ be fixed so that
\begin{align}\label{small_suff}
0<\eps < \frac{\mathfrak{c}}{2}.
\end{align}By \cref{L:energy right cont} we can find $\delta>0$ such that 
\begin{equation}\label{en_cont_right}
 E_{u_0}- E_u (\delta)<\eps.
\end{equation}
Moreover, in view of \cref{R:weak conv all times}, by possibly reducing $\delta>0$ if necessary, we can also assume that $u^{\nu_n}(\delta)\rightharpoonup u(\delta)$ in $L^2(\T^2)$. From now on such $\delta$ will be fixed.
We split
\begin{align}\label{mathishard}
\nu_n \int_0^{\delta} \| \nabla u^{\nu_n}(t)\|_{L^2}^2 \,dt= E_{u_0^{\nu_n}}-E_{u^{\nu_n}}(\delta) =  \underbrace{E_{u_0^{\nu_n}} - E_{u_0}}_{ I_{n}} +E_{u_0} - E_{u}(\delta) + \underbrace{E_{u}(\delta)- E_{u^{\nu_n}}(\delta)}_{II_{n}(\delta)}.
\end{align}
By \eqref{convergence_1} we have $\limsup_{n\rightarrow \infty} I_n=0$. Moreover, by the lower semicontinuity of the $L^2$ norm under weak convergence we also get
$$
\limsup_{n\rightarrow \infty} II_n(\delta)=E_u(\delta)- \liminf_{n\rightarrow \infty}E_{u^{\nu_n}}(\delta) \leq 0.
$$
Thus, letting $n\rightarrow \infty$ in \eqref{mathishard} we achieve
\begin{equation}\label{no diss in zero delta}
\limsup_{n\rightarrow \infty} \nu_n \int_0^{\delta} \| \nabla u^{\nu_n}(t)\|_{L^2}^2 \,dt \leq E_{u_0} - E_u(\delta) <\eps < \frac{\mathfrak{c}}{2},
\end{equation}
where the last two inequalities are due to \eqref{en_cont_right} and \eqref{small_suff} respectively.

\textsc{Step 2:} No dissipation in $(\delta,T)$.    

Here we deal with the second integral in \eqref{Luigi_can_you_find_this}. Let us consider a sufficiently small number $\eta>0$, which will be chosen later, depending  on $\delta$ which was already fixed in \eqref{en_cont_right}.
Using the mollifier $\rho_{\eta\sqrt{\nu_n}}$ defined in \eqref{mollifier_1}, we can decompose 
\begin{equation}\label{split vortic molli}
\omega^{\nu_n}=\underbrace{\omega^{\nu_n}*\rho_{\eta\sqrt{\nu_n}}}_{=:\omega^{\nu_n}_1} + \underbrace{\left(\omega^{\nu_n} - \omega^{\nu_n}*\rho_{\eta\sqrt{\nu_n}}  \right)}_{=:\omega^{\nu_n}_2}.
\end{equation}
Again, from the definition of the mollifier in \eqref{mollifier_1}, we observe that
\begin{align}\label{lin_part1}
\| \omega^{\nu_n}_1(t)\|_{L^1}\le  \| \omega^{\nu_n}(t)\|_{L^1} \qquad \text{ and } \qquad \| \omega^{\nu_n}_1(t)\|_{L^\infty}\le \frac{1}{\eta^2\nu_n}\sup_{x_0\in \mathbb{T}^2}\int_{B_{\eta\sqrt{\nu_n}}(x_0)}|\omega^{\nu_n}(x,t)|\,dx.
\end{align}
In addition, a standard convergence rate of mollification gives us (e.g. \cite{majda2002vorticity}*{Lemma 3.5 (iv)})
\begin{align}\label{lin_part2}
\| \omega_2^{\nu_n}(t)\|_{L^2}^2\le C\eta^2\nu_n \| \nabla \omega^{\nu_n}(t)\|_{L^2}^2.
\end{align}
Now, let us consider the contribution to the dissipation of $\omega_1$ and $\omega_2$ separately,
\begin{align}\label{decomp_1_chap4}
\nu_n\int_{\delta}^{T}\| \omega^{\nu_n}(t)\|_{L^2}^2 \,dt \le C\left(\nu_n\int_{\delta}^{T}\| \omega^{\nu_n}_1(t)\|_{L^2}^2\, dt  + \nu_n\int_{\delta}^{T}\| \omega^{\nu_n}_2(t)\|_{L^2}^2 \,dt \right).
\end{align}

For the first term in the right-hand side, we use \eqref{lin_part1} to deduce
\begin{align}
\| \omega^{\nu_n}_1(t)\|_{L^2}^2&\le \| \omega^{\nu_n}_1(t)\|_{L^1}\| \omega^{\nu_n}_1(t)\|_{L^\infty}\le \frac{ \| \omega^{\nu_n}(t)\|_{L^1}}{\eta^2\nu_n}\sup_{x_0\in \mathbb{T}^2}\int_{B_{\eta\sqrt{\nu_n}}(x_0)}|\omega^{\nu_n}(x,t)|\,dx\\
& \le  \frac{ \sup_{\nu>0}\| \omega^\nu_0\|_{\mathcal{M}}}{\eta^2\nu_n}\sup_{x_0\in \mathbb{T}^2}\int_{B_{\eta\sqrt{\nu_n}}(x_0)}|\omega^{\nu_n}(x,t)|\,dx,
\end{align}
where the last inequality follows from \eqref{sym1}. Therefore, by  Fatou's lemma and \eqref{jaemin please don't leave} (up to possibly considering a further subsequence), we achieve
\[
\limsup_{n\to \infty}\nu_n\int_{\delta}^T \| \omega^{\nu_n}_1(t)\|_{L^2}^2\,dt \le  \frac{ \sup_{\nu>0}\| \omega^{\nu}_0\|_{\mathcal{M}}}{\eta^2}\int_{\delta}^T\limsup_{n\to \infty}\sup_{x_0\in\mathbb{T}^2}\int_{B_{\eta\sqrt{\nu_n}}(x_0)}|\omega^{\nu_n}(x,t)|\,dxdt =0.
\]

For the second term in \eqref{decomp_1_chap4}, we use \eqref{lin_part2} and \eqref{sym2} to derive
\[
\nu_n  \int_{\delta}^T \| \omega^{\nu_n}_2(t)\|_{L^2}^2\, dt \le C\eta^2 \nu_n^2 \int_{\delta}^T \| \nabla \omega^{\nu_n}(t)\|_{L^2}^2 \,dt\le C_{\delta,T}\eta^2 \sup_{\nu>0}\| \omega^\nu_0\|_{\mathcal{M}}^2.
\]
Therefore these estimates for each term in the right-hand side of \eqref{decomp_1_chap4}, combined with the usual singular integral operator estimate $\| \nabla u^\nu(t)\|_{L^2}=\| \omega(t)\|_{L^2}$, yield to
\[
\limsup_{n\to \infty}\nu_n\int_{\delta}^{T}\| \nabla u^{\nu_n}(t)\|_{L^2}^2 \,dt=\limsup_{n\to \infty}\nu_n\int_{\delta}^{T}\| \omega^{\nu_n}(t)\|_{L^2}^2\, dt \le  C_{\delta,T}\eta^2 \sup_{\nu>0}\| \omega^\nu_0\|_{\mathcal{M}}^2.
\]
Now, since $\delta>0$ is already fixed, we can choose $\eta$ sufficiently small depending on $C_{\delta,T}$ and $ \sup_{\nu>0}\| \omega^\nu_0\|_{\mathcal{M}}$ in the above inequality so that
\[
\limsup_{n\to \infty}\nu_n\int_{\delta}^{T}\| \nabla u^{\nu_n}(t)\|_{L^2}^2 \,dt< \frac{\mathfrak{c}}{2}.
\]
Finally, we combine this with \eqref{no diss in zero delta} and conclude that taking $n\to \infty$ in \eqref{Luigi_can_you_find_this} leads us to
\[
\limsup_{n\to\infty} \nu_n \int_0^{T} \| \nabla u^{\nu_n}(t)\|_{L^2}^2 \,dt < \mathfrak{c},
\]
which contradicts \eqref{contrary}.
\end{proof}

\section{Further discussions and remarks}\label{S:further remarks}
This section aims to provide a technically stronger version of our main \cref{T:main}, which in particular leads to a natural and quite geometrical \quotes{singularity statement} interpretation when anomalous dissipation happens. We also wish to give some additional comments on our proof which should help in clarifying on the choices we have made.

\subsection{A more general statement \& singularity interpretation}
As it is apparent from the proof of \cref{P:no anom diss}, the key elements which prohibit anomalous dissipation are the strong $L^2(\T^2)$ compactness of the sequence of initial velocities $\{u_0^\nu\}_\nu$ and the absence of atomic concentration in the vorticity measure $\{|\omega^\nu|\}_\nu$. Assuming $\{u_0^\nu\}_\nu\subset L^2(\T^2)$ strongly compact is natural since otherwise wild oscillations in the initial data allow for anomalous dissipation even in the (linear) heat equation. Thus, the only non-trivial requirement is that on the vorticity. Our assumption \ref{H2} on the initial vorticities is just a particular case which implies that  $\{|\omega^\nu|\}_\nu$ does not have atomic concentrations in space, for almost every time. In view of that, let us state a more general version of \cref{T:main} which highlights what is really needed to run the argument.
\begin{theorem}\label{T:main more general}
Let $\{u^\nu_0\}_{\nu>0}\subset L^2(\T^2)$ be a sequence of divergence-free vector fields satisfying \ref{H1} and   let $\{u^\nu\}_{\nu>0}$ be the corresponding sequence of Leray--Hopf solutions to \eqref{NS} with $\nu\to 0$. Assume that $\{ \omega^\nu\}_{\nu>0}$ is bounded in $L^\infty([0,T];L^1(\T^2))$. If
\begin{equation}
    \label{space_time atomic concentration}
    \lim_{R\rightarrow 0} \limsup_{\nu\rightarrow 0} \int_0^T \sup_{x_0\in \T^2} \int_{B_R(x_0)} |\omega^\nu(x,t)|\,dxdt=0,
\end{equation}
then
\begin{equation}
    \limsup_{\nu\rightarrow 0} \nu\int_0^T\int_{\T^2} |\nabla u^\nu (x,t)|^2\,dxdt=0.
\end{equation}
\end{theorem}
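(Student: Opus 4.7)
The plan is to mirror the proof of \cref{P:no anom diss} while adapting the two steps that genuinely relied on the Delort--Majda decomposition \ref{H2}: (a) the admissibility of any Euler limit $u$ must be retrieved from the space-time non-concentration \eqref{space_time atomic concentration} in place of \ref{H2}, and (b) the control of the $\omega_1$ piece on $(\delta,T)$ should be done directly, without passing through the pointwise-in-time version \eqref{jaemin please don't leave}. The $(0,\delta)$ argument and the mollification-error estimate on $\omega_2$ are then essentially unchanged.

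First I would extract a subsequence $\nu_n\to 0$ such that $u^{\nu_n}_0\to u_0$ strongly in $L^2$ (from \ref{H1}), $u^{\nu_n}\rightharpoonup^* u$ in $L^\infty([0,T];L^2(\T^2))$, $|\omega^{\nu_n}|\rightharpoonup^*\mu$ in $L^\infty([0,T];\mathcal{M}(\T^2))$, and --- via an Ascoli--Arzel\`a argument on the vorticity equation in the spirit of \cref{R:weak conv all times} --- also $|\omega^{\nu_n}(t)|\rightharpoonup^*\mu_t$ in $\mathcal{M}(\T^2)$ for a.e. $t\in[0,T]$. The crucial step is to show that $\mu_t$ is non-atomic for a.e. $t$: granted this, \cref{concentration_lem} gives the quadratic convergence \eqref{quadratic_convergence} and, via the argument of Section~\ref{D_M_argu}, $u$ is an admissible weak solution to \eqref{E}. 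I would prove non-atomicity by contradiction: if
\[
A_{m_0}:=\bigl\{t\in[0,T]\,:\,\sup_{x_0\in\T^2}\mu_t(\{x_0\})\ge m_0\bigr\}
\]
had positive measure for some $m_0>0$, a measurable selection (e.g. von Neumann) would yield a measurable $x^\ast:A_{m_0}\to\T^2$ with $\mu_t(\{x^\ast(t)\})\ge m_0$. Portmanteau's theorem for open sets then gives, for every $R>0$ and a.e. $t\in A_{m_0}$,
\[
\liminf_n \sup_{y\in\T^2}|\omega^{\nu_n}(t)|(B_R(y))\ge \liminf_n |\omega^{\nu_n}(t)|(B_R(x^\ast(t)))\ge \mu_t(B_R(x^\ast(t)))\ge m_0,
\]
and Fatou combined with \eqref{space_time atomic concentration} forces $m_0|A_{m_0}|\le 0$, a contradiction.

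With $u$ identified as an admissible Euler weak solution, Step~1 of the proof of \cref{P:no anom diss} goes through verbatim: supposing $\nu_n\int_0^T\|\nabla u^{\nu_n}\|_{L^2}^2\,dt\to \mathfrak c>0$, pick $\eps\in(0,\mathfrak c/2)$ and, thanks to \cref{L:energy right cont}, select $\delta>0$ with $E_{u_0}-E_u(\delta)<\eps$ and --- shrinking $\delta$ further via \cref{R:weak conv all times} --- also $u^{\nu_n}(\delta)\rightharpoonup u(\delta)$ in $L^2$, so that \eqref{mathishard} and the $L^2$-lower semicontinuity bound the dissipation on $(0,\delta)$ by $\eps<\mathfrak c/2$. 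For the $(\delta,T)$ piece, set $M:=\sup_\nu\|\omega^\nu\|_{L^\infty_t L^1_x}$ and decompose $\omega^{\nu_n}=\omega_1+\omega_2$ with $\omega_1:=\omega^{\nu_n}\ast\rho_{\eta\sqrt{\nu_n}}$. The direct estimate
\[
\nu_n\int_\delta^T\|\omega_1(t)\|_{L^2}^2\,dt\le \frac{M}{\eta^2}\int_\delta^T\sup_{x_0}\int_{B_{\eta\sqrt{\nu_n}}(x_0)}|\omega^{\nu_n}|\,dx\,dt
\]
vanishes as $n\to\infty$ straight from \eqref{space_time atomic concentration} (since $\eta\sqrt{\nu_n}\to 0$), with no Fatou needed. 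For $\omega_2$ the usual mollification error bound $\|\omega_2(t)\|_{L^2}^2\le C\eta^2\nu_n\|\nabla\omega^{\nu_n}(t)\|_{L^2}^2$, together with an analogue of \eqref{sym2} --- valid here by restarting the argument of \cref{equi_integrability}\ref{worsttrip} from an arbitrary small positive time, where the $L^\infty_t L^1_x$ bound on the vorticity is in force --- yields $\nu_n\int_\delta^T\|\omega_2(t)\|_{L^2}^2\,dt\le C_{\delta,T}\eta^2M^2$. Choosing $\eta$ sufficiently small makes this $<\mathfrak c/2$, contradicting $\mathfrak c>0$.

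The main obstacle is the non-atomicity deduction in the second paragraph: it combines a measurable selection of the location of a putative atom with the upgrade of the $L^\infty_t$-weak-$\ast$ convergence to a time-slicewise convergence $|\omega^{\nu_n}(t)|\rightharpoonup^*\mu_t$ in $\mathcal{M}(\T^2)$ for a.e. $t$, the latter extracted from the vorticity equation by an Aubin--Lions / Ascoli-type compactness. All the remaining steps amount to a mechanical transcription of the proof of \cref{P:no anom diss}.
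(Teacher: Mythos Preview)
Your overall strategy is exactly the paper's: rerun the proof of \cref{P:no anom diss}, replacing the two consequences of \ref{H2} (non-atomicity of any weak\(^*\) limit of $\{|\omega^{\nu}|\}$, and the pointwise-in-time non-concentration \eqref{jaemin please don't leave}) by the integrated assumption \eqref{space_time atomic concentration}. Your handling of Step~2 is correct and, for the $\omega_1$ piece, actually cleaner than in the original proof: since $\eta\sqrt{\nu_n}\to 0$, the bound follows directly from \eqref{space_time atomic concentration} and no Fatou is needed. The restart of \eqref{sym2} from a small positive time is also fine, since $u^\nu$ is smooth for $t>0$ and the assumed $L^\infty_t L^1_x$ bound on $\omega^\nu$ supplies the input for \cref{equi_integrability}\ref{worsttrip}.

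The one genuine soft spot is the non-atomicity deduction, precisely where you locate it. An Ascoli--Arzel\`a/Aubin--Lions argument on the vorticity equation gives time-compactness for $\omega^{\nu_n}$, but not for $|\omega^{\nu_n}|$: Kato's inequality only yields a one-sided distributional bound on $\partial_t|\omega^{\nu_n}|$, which is not enough to upgrade $|\omega^{\nu_n}|\rightharpoonup^*\mu$ in $L^\infty([0,T];\mathcal M(\T^2))$ to $|\omega^{\nu_n}(t)|\rightharpoonup^*\mu_t$ in $\mathcal M(\T^2)$ for a.e.\ $t$. Without that slicewise convergence, your Portmanteau step is unjustified. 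The paper bypasses this entirely (see \cref{P: no atoms equivalent space_time}): given a measurable $t\mapsto x_0(t)$ locating a putative atom on a set $I$ of positive measure, test the $L^\infty_t\mathcal M_x$--$L^1_tC^0_x$ duality against $\varphi_R(\cdot,t)\in C^0_c(B_{2R}(x_0(t)))$ with $\varphi_R\equiv 1$ on $B_R(x_0(t))$, obtaining
\[
0<\int_I \mu_t(\{x_0(t)\})\,dt \le \langle \mu,\varphi_R\rangle=\lim_{n}\int_I\!\!\int_{\T^2}|\omega^{\nu_n}|\varphi_R\,dx\,dt\le \limsup_{n}\int_0^T\sup_{x_0}\int_{B_{2R}(x_0)}|\omega^{\nu_n}|\,dx\,dt,
\]
which contradicts \eqref{space_time atomic concentration} as $R\to 0$. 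This uses only the global weak\(^*\) convergence and your measurable selection, and replaces your second paragraph cleanly; the rest of your proposal then goes through verbatim.
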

The proof of the previous theorem follows by the very same argument of that of \cref{P:no anom diss} without any modification. In \cref{LH_sol_properties} we have proved that the assumption \ref{H2}  guarantees the validity of \eqref{space_time atomic concentration}. 
Having established the more general \cref{T:main more general}, the corresponding singularity-type statement for vanishing viscosity sequences readily follows. 

\begin{corollary}\label{C:singularity statement}
Let $\{u^\nu_0\}_{\nu>0}\subset L^2(\T^2)$ be a sequence of divergence-free vector fields satisfying \ref{H1} and   let $\{u^\nu\}_{\nu>0}$ be the corresponding sequence of Leray--Hopf solutions to \eqref{NS} with $\nu\to 0$. Assume that $\{ \omega^\nu\}_{\nu>0}$ is bounded in $L^\infty([0,T];L^1(\T^2))$. If 
\begin{equation}
    \liminf_{\nu\rightarrow 0} \nu\int_0^T\int_{\T^2} |\nabla u^\nu (x,t)|^2\,dxdt>0,
\end{equation}
then, for any subsequence $\{\omega^{\nu_n}\}_n$, it must hold
\begin{equation}
    \label{space_time atomic concentration 2}
    \lim_{R\rightarrow 0} \limsup_{n \rightarrow \infty} \int_0^T \sup_{x_0\in \T^2} \int_{B_R(x_0)} |\omega^{\nu_n}(x,t)|\,dxdt >0.
\end{equation}
\end{corollary}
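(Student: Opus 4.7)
The corollary is essentially the contrapositive of Theorem~\ref{T:main more general} combined with the equivalence established in Proposition~\ref{P: no atoms equivalent space_time}, so the plan is straightforward and mostly bookkeeping.

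First I would argue by contradiction on the first conclusion. Suppose there exists a subsequence $\{\omega^{\nu_n}\}_n$ for which
$$
\lim_{R\to 0}\limsup_{n\to\infty}\int_0^T \sup_{x_0\in\T^2}\int_{B_R(x_0)}|\omega^{\nu_n}(x,t)|\,dxdt=0.
$$
Since this subsequence still satisfies \ref{H1} together with the $L^\infty_tL^1_x$ vorticity bound, Theorem~\ref{T:main more general} applies to it and yields
$$
\limsup_{n\to\infty}\nu_n\int_0^T\int_{\T^2}|\nabla u^{\nu_n}(x,t)|^2\,dxdt=0.
$$
However, by hypothesis $\liminf_{\nu\to 0}\nu\int_0^T\int_{\T^2}|\nabla u^\nu|^2\,dxdt>0$, and since the liminf of any subsequence is at least the liminf of the full sequence, the displayed limsup must be strictly positive, a contradiction. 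This proves \eqref{space_time atomic concentration 2} for every subsequence.

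Next I would deduce the \emph{in particular} claim on atoms. Given the uniform bound $\sup_{t,\nu}\|\omega^\nu(t)\|_{L^1}<\infty$, the sequence $\{|\omega^\nu|\}_\nu$ is bounded in $L^\infty([0,T];\mathcal M(\T^2))$, so by the sequential weak* compactness of this space (which is the dual of $L^1([0,T];C^0(\T^2))$) we can extract a subsequence $\nu_n\to 0$ with $|\omega^{\nu_n}|\rightharpoonup^* \mu$ in $L^\infty([0,T];\mathcal M(\T^2))$. Applying Proposition~\ref{P: no atoms equivalent space_time} to this subsequence, the non-atomicity of $\mu_t$ for a.e.\ $t$ would be equivalent to
$$
\lim_{R\to 0}\limsup_{n\to\infty}\int_0^T \sup_{x_0\in\T^2}\int_{B_R(x_0)}|\omega^{\nu_n}(x,t)|\,dxdt=0,
$$
which was already ruled out by the first part of the statement. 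Hence $\mu_t$ must admit at least one atom on a set of times of positive Lebesgue measure. Since this argument applies to \emph{any} weak* limit (each one arises as the limit along some subsequence), the claim follows.

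There is no serious obstacle here: the only thing worth double-checking is that Proposition~\ref{P: no atoms equivalent space_time} is stated for arbitrary sequences in $L^\infty_tL^1_x$ admitting a weak* limit, which is precisely our setting. Everything else is logical manipulation of the quantifiers in Theorem~\ref{T:main more general}.
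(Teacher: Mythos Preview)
Your proof is correct and follows exactly the route the paper intends: the corollary is stated without proof as an immediate consequence of Theorem~\ref{T:main more general}, and your contrapositive argument together with the invocation of Proposition~\ref{P: no atoms equivalent space_time} is precisely how one unpacks ``readily follows''. There is nothing to add.
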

Let us emphasize that here we are neither claiming that, in general, anomalous dissipation in two dimensions can occur, nor that the concentration of the vorticity measure can provide an effective mechanism for it. However, \cref{C:singularity statement} highlights that \eqref{space_time atomic concentration 2} is at least a necessary condition for anomalous dissipation to happen, which we hope might give some new/different insights on the topic. Also, as opposite as the usual singularity-type statements on the blow-up of every Onsager's subcritical norm, our condition \eqref{space_time atomic concentration 2} is perhaps more geometric than analytic: for anomalous dissipation to happen, regions with intense positive and negative vorticity must collapse and give positive mass to points.

\subsection{Further comments on the proof of \cref{T:main}}
Here we wish to comment a bit more on the proof of \cref{P:no anom diss}, more specifically on why the vorticity measure $|\omega^\nu|$ appears on a disk of size proportional $\sqrt{\nu}$.

It is clear that as soon as in the splitting \eqref{split vortic molli} we mollify at some length scale $\alpha$, that is
\begin{equation}
\omega^{\nu}=\underbrace{\omega^{\nu}*\rho_{\alpha}}_{=:\omega^{\nu}_1} + \underbrace{\left(\omega^{\nu} - \omega^{\nu}*\rho_{\alpha}  \right)}_{=:\omega^{\nu}_2},
\end{equation}
then by the very same computations we get
\begin{align}\label{remarks balance 1}
\nu \int_\delta^T \| \omega^{\nu}_1(t)\|_{L^2}^2 \,dt\le   C \frac{\nu }{\alpha^2}\int_\delta^T \left( \sup_{x_0\in \mathbb{T}^2}\int_{B_{\alpha}(x_0)}|\omega^{\nu}(x,t)|\,dx\right) dt
\end{align}
and 
\begin{align}\label{remarks on the balance 2}
\nu \int_\delta^T \| \omega^{\nu}_2(t)\|_{L^2}^2 \,dt\le   C_{\delta, T} \frac{\alpha^2}{\nu}.
\end{align}
In particular, the choice $\alpha=\sqrt{\nu}$ is the only one which makes both terms bounded in the viscosity parameter. With this choice, the right hand side in \eqref{remarks balance 1} will vanish when $\nu\rightarrow 0$ since the vorticity measure does not concentrate.  The additional small parameter $\eta$  in \eqref{split vortic molli} is then used to guarantee that also the right hand side in \eqref{remarks on the balance 2} can be made arbitrarily small.

However, besides optimizing the estimate, the appearance of the vorticity measure on a disk of size $\sqrt{\nu}$ has a more intrinsic reason. Indeed, a direct application of \cref{P:vort_est_2d} and the Cauchy--Schwartz inequality yields
\begin{equation}
    \label{goal remark}
    \int_{B_{\sqrt{\nu}}(x_0)}|\omega^{\nu}(x,t)|\,dx\leq \frac{\|u_0^\nu\|_{L^2}}{\sqrt{2 t}}\qquad \text{for all } t>0.
\end{equation}
In particular, for any $t>0$, the mass of $|\omega^\nu(t)|$ on disks of radius proportional to $\sqrt \nu$ stays always bounded, in $\nu\rightarrow 0$, for general Leray--Hopf solutions without any  assumption on the initial vorticity.
Thus, as soon as the sequence of initial data $\{u_0^\nu\}_\nu$ stays bounded $L^2(\T^2)$, the corresponding sequence of Leray--Hopf weak solutions enjoys
$$
\lim_{\nu\rightarrow 0} \frac{\alpha(\nu)}{\sqrt{\nu}}=0 \quad \Longrightarrow \quad \lim_{\nu\rightarrow 0} \sup_{x_0 \in \T^2} \int_{B_{\alpha(\nu)}(x_0)}|\omega^{\nu}(x,t)|\,dx=0 \qquad \text{for all } t>0.
$$
This should be coherent with the prediction (via dimensional analysis) that below the Kolmogorov length scale, which we recall in two dimensions to be proportional to $\sqrt{\nu}$, the dynamics is viscosity dominated and thus all quantities behave as if they were smooth.
\color{black}

\section{Energy conservation vs strong compactness}\label{S:en cons and compactness}
We have established that no anomalous dissipation can occur if the initial vorticities satisfy \ref{H2}. A simple argument shows that having both no anomalous dissipation and kinetic energy conservation of the limit is equivalent to the $L^2(\T^2\times [0,T])$ strong compactness of the sequence $\{u^\nu\}_\nu$. Since the latter property does not seem to be expected in this context,  one should not believe that our result can be upgraded to kinetic energy conservation of the limit. However,  since this (very delicate!) issue is quite unclear to the authors, we prefer to avoid any claim.

Although the relation between strong compactness of $\{u^\nu\}_\nu$ and energy conservation of the limit $u$ was already known in more generality (see \cite{LMP21}*{Theorem 2.11}), here we give a very elementary proof when no anomalous dissipation is assumed. {We emphasise that the general statement \cite{LMP21}*{Theorem 2.11} proves the quite interesting feature that the mathematical analysis of two-dimensional turbulence is very different than that in three dimensions. Indeed, in the latter there is a great consensus on the coexistence of $L^2$ strong compactness of the velocity and anomalous dissipation.}

\begin{proposition}\label{P:en cons and compact}
Let $\{u_0^\nu\}_{\nu>0}\subset L^2(\T^2)$ be a sequence of divergence-free initial data such that $u_0^\nu \rightarrow u_0$ in $L^2(\T^2)$ and denote by $u^\nu$ the corresponding sequence of Leray--Hopf solutions to \eqref{NS} with $\nu\rightarrow 0$. Assume that 
$$
\limsup_{\nu\rightarrow 0} \nu\int_0^T  \int_{\T^2} \left|\nabla u^{\nu}(x,t)\right|^2\,dxdt =0
$$
and $u^\nu \rightharpoonup u$ in $L^2(\T^2\times [0,T])$. Then, denoting by $E_u$ the kinetic energy of the limit $u$, we have that $E_u(t)=E_{u_0}$ for a.e. $t\in [0,T]$ implies $u^\nu \rightarrow u$ in $L^p([0,T];L^2(\T^2))$ for all $p\in [1,\infty)$. Conversely, if $u^\nu \rightarrow u$ in $L^1([0,T];L^2(\T^2))$ then $E_u(t)=E_{u_0}$ for a.e. $t\in [0,T]$.
\end{proposition}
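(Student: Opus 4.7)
The plan is to exploit the two-dimensional Leray--Hopf energy equality together with the no-dissipation hypothesis to reduce the proposition to a pointwise-in-time application of the Radon--Riesz property in the Hilbert space $L^2(\T^2)$. Recall that in two dimensions \eqref{Leray en ineq} upgrades to an equality, so that for every $t\in[0,T]$,
$$
E_{u^\nu}(t) = E_{u_0^\nu} - \nu \int_0^t \|\nabla u^\nu(s)\|_{L^2}^2\,ds.
$$
The $L^2$-convergence $u_0^\nu\to u_0$ yields $E_{u_0^\nu}\to E_{u_0}$, while the no-dissipation hypothesis controls $\nu\int_0^t\|\nabla u^\nu\|_{L^2}^2\,ds$ by $\nu\int_0^T\|\nabla u^\nu\|_{L^2}^2\,ds$, which vanishes as $\nu\to 0$ \emph{uniformly} in $t\in[0,T]$. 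Hence $E_{u^\nu}(t)\to E_{u_0}$ uniformly in $t$. Moreover, the Leray energy balance provides a uniform $L^\infty_t L^2_x$ bound, so that the assumed weak convergence in $L^2(\T^2\times[0,T])$ upgrades to weak-$*$ convergence in $L^\infty([0,T];L^2(\T^2))$; by \cref{R:weak conv all times} this further gives $u^\nu(t)\rightharpoonup u(t)$ in $L^2(\T^2)$ for a.e. $t\in[0,T]$.

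For the forward implication, under the assumption $E_u(t)=E_{u_0}$ for a.e. $t$, I combine the two facts above to get that, for a.e. $t$, both $u^\nu(t)\rightharpoonup u(t)$ in $L^2(\T^2)$ and $\|u^\nu(t)\|_{L^2}\to \|u(t)\|_{L^2}$. The Radon--Riesz property in the Hilbert space $L^2(\T^2)$ then delivers $u^\nu(t)\to u(t)$ strongly in $L^2(\T^2)$ for a.e. $t$. To lift this a.e.-pointwise strong convergence to $L^p([0,T];L^2(\T^2))$-convergence for all $p\in[1,\infty)$, I would invoke Lebesgue's dominated convergence theorem applied to $t\mapsto \|u^\nu(t)-u(t)\|_{L^2}^p$, with the time-independent majorant $\bigl(2\sup_{\nu>0}\|u^\nu\|_{L^\infty_t L^2_x}\bigr)^p$, which is finite thanks to the uniform energy bound.

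For the converse direction, suppose $u^\nu\to u$ in $L^1([0,T];L^2(\T^2))$. Extracting a subsequence along which $\|u^\nu(t)-u(t)\|_{L^2}\to 0$ for a.e. $t$, this a.e.-strong convergence gives $E_{u^\nu}(t)\to E_u(t)$ for a.e. $t$. Comparing with the uniform convergence $E_{u^\nu}(t)\to E_{u_0}$ already established, one concludes $E_u(t)=E_{u_0}$ for a.e. $t$; since this identifies $E_u$ pointwise in $t$ (a property of $u$ alone), no further extraction is needed.

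I do not expect a real obstacle here: the only nontrivial ingredient is the equivalence between weak-plus-norm convergence and strong convergence in a Hilbert space, and the 2D Leray--Hopf energy \emph{equality} is exactly what allows to convert no anomalous dissipation into uniform pointwise convergence of the kinetic energies, in sharp contrast with the three-dimensional setting mentioned after the proposition.
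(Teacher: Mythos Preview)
Your proof is correct, and the converse direction coincides with the paper's. For the forward implication, however, you take a different route from the paper. The paper integrates the energy equality over $[0,T]$ to deduce $\|u^\nu\|_{L^2(\T^2\times[0,T])}\to\|u\|_{L^2(\T^2\times[0,T])}$, applies the Radon--Riesz property once in the space-time Hilbert space $L^2(\T^2\times[0,T])$, and then interpolates against the uniform $L^\infty_tL^2_x$ bound to reach all $L^p_tL^2_x$. You instead work pointwise in time: you use the energy equality plus the no-dissipation hypothesis to get $E_{u^\nu}(t)\to E_{u_0}$ uniformly in $t$, invoke \cref{R:weak conv all times} to obtain $u^\nu(t)\rightharpoonup u(t)$ for a.e.\ $t$, apply Radon--Riesz in $L^2(\T^2)$ at each such $t$, and conclude via dominated convergence. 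The paper's argument is more self-contained (it avoids the extra input of pointwise-in-time weak convergence from the equation), while your approach yields the slightly stronger intermediate statement that $u^\nu(t)\to u(t)$ strongly in $L^2(\T^2)$ for a.e.\ $t$, not merely in the time-averaged sense.
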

\begin{proof}
Assume that $E_u(t)=E_{u_0}$ for a.e. $t\in [0,T]$. Since by 
\begin{equation}\label{en balance}
E_{u^\nu} (t)+ \nu \int_0^t \int_{\T^2} |\nabla u^\nu (x,s)|^2 \,dxds= E_{u^\nu_0} 
\end{equation}
we have that $\{u^\nu\}_\nu$ is bounded in $L^\infty([0,T];L^2(\T^2))$, it is enough to prove that $u^\nu \rightarrow u$ in $L^2(\T^2\times [0,T])$. Integrating \eqref{en balance} in time and sending $\nu \rightarrow 0$ yields to 
$$
\int_0^T E_u(t)\,dt\leq \liminf_{\nu \rightarrow 0} \int_0^T E_{u^\nu}(t)\,dt\leq \limsup_{\nu \rightarrow 0} \int_0^T E_{u^\nu}(t)\,dt =T E_{u_0} =\int_0^T E_u(t)\,dt.
$$
In particular 
$$
\|u^\nu\|_{L^2(\T^2\times [0,T])}\rightarrow \|u\|_{L^2(\T^2\times [0,T])},
$$
which together with $u^\nu \rightharpoonup u$ in $L^2(\T^2\times [0,T])$ implies strong $L^2$ convergence.

Assume now $u^\nu \rightarrow u$ in $L^1([0,T];L^2(\T^2))$. Then, up to possibly extract a (non-relabelled) subsequence, we have $u^\nu (t)\rightarrow u(t)$ in $L^2(\T^2)$ for a.e. $t\in [0,T]$. Thus, by letting $\nu \rightarrow 0$ in \eqref{en balance}, and since $u_0^\nu \rightarrow u_0$ in $L^2(\T^2)$, we conclude $E_u(t)=E_{u_0}$ for a.e. $t\in [0,T]$. 
\end{proof}

\bibliographystyle{plain} 
\bibliography{biblio}

\end{document}